\newtheorem{remark}{Remark}
\newtheorem{prop}{Proposition}
\newtheorem{theo}{Theorem}
\newtheorem{coro}{Corollary}
\newtheorem{assum}{Assumption}
\newtheorem{example}{Example}
\newcommand{\sA}{{\mathsf A}}
\newcommand{\sJ}{{\mathsf J}}
\newcommand{\cT}{{\mathcal T}}
\newcommand{\cH}{{\mathcal H}}
\newcommand{\cF}{{\mathcal F}}
\newcommand{\cZ}{{\mathcal Z}}
\newcommand{\cB}{{\mathcal B}}
\newcommand{\cD}{{\mathcal D}}
\newcommand{\cE}{{\mathcal E}}
\newcommand{\cS}{{\mathcal S}}
\newcommand{\bP}{{\mathbb P}}
\newcommand{\bR}{{\mathbb R}}
\newcommand{\bE}{{\mathbb E}}
\newcommand{\E}{{\mathbb E}}
\newcommand{\bN}{{\mathbb N}}
\newcommand{\ps}[1]{\langle #1 \rangle}
\newcommand{\zer}{\mathrm{zer}}
\newcommand{\dom}{\mathrm{dom}}
\newcommand{\graph}{\mathrm{gr}}
\newcommand{\prox}{\mathop{\mathrm{prox}}\nolimits}
\newcommand{\minimax}{\mathop{\mathrm{minimax}}}
\newcommand{\pb}[1]{{\color{black}#1}}
\title{Ergodic convergence of a stochastic \\ proximal point algorithm\thanks{This work was partly supported by the Agence Nationale pour la Recherche, France, (ODISSEE project, ANR-13-ASTR-0030) and the Orange - Telecom ParisTech think tank phi-TAB. Part of this work was published in~\cite{bianchi2015stochastic}.}} 
\author{Pascal Bianchi\thanks{LTCI, CNRS, Télécom ParisTech, Université Paris-Saclay, 75013, Paris, France ({pascal.bianchi@telecom-paristech.fr}).}}
\begin{document}
\maketitle

\begin{abstract}
  The purpose of this paper is to establish the almost sure weak ergodic convergence of
a sequence of iterates $(x_n)$ given by
$$
x_{n+1} = (I+\lambda_n A(\xi_{n+1},\,.\,))^{-1}(x_n)
$$
where $(A(s,\,.\,):s\in E)$ is a collection of maximal monotone operators on a separable Hilbert space,
$(\xi_n)$ is an independent identically distributed sequence of random variables on $E$
and $(\lambda_n)$ is a positive sequence in $\ell^2\backslash \ell^1$. 
The weighted averaged sequence of iterates is shown to converge weakly to a zero (assumed to exist) of the Aumann expectation
${\mathbb E}(A(\xi_1,\,.\,))$ under the assumption that the latter is maximal.
We consider applications to stochastic optimization problems of the form
$$
\min {\mathbb E}(f(\xi_1,x))\ \text{w.r.t. }x\in \bigcap_{i=1}^m X_i
$$
where $f$ is a normal convex integrand and $(X_i)$ is a collection of closed convex sets.
In this case, the iterations are closely related to a stochastic proximal algorithm recently proposed by
Wang and Bertsekas.
\end{abstract}

\begin{keywords}
Proximal point algorithm, Stochastic approximation, Convex programming.
\end{keywords}

\begin{AMS} 90C25, 65K05
\end{AMS}

\pagestyle{myheadings}
\thispagestyle{plain}

\section{Introduction}

The proximal point algorithm is a method for finding a zero of a maximal monotone operator $\sA:\cH\to 2^\cH$
on some Hilbert space $\cH$ \emph{i.e.}, a point $x\in \cH$ such that $0\in \sA(x)$.
The approach dates back to \cite{martinet1970breve}~\cite{rockafellar1976monotone}~\cite{brezis1978produits}
and has aroused a vast literature. The algorithm consists in the iterations
$$
y_{n+1} = (I+\lambda_{n} \sA)^{-1}y_n
$$
for $n\in \bN$ where $\lambda_{n}>0$ is a positive step size. When the
sequence $(\lambda_n)$ is bounded away from zero, it was shown in
\cite{rockafellar1976monotone} that $(y_n)$ converges
weakly to some zero of $\sA$ (assumed to exist).  The case of vanishing
step size was investigated by several authors
including~\cite{brezis1978produits},~\cite{passty1979ergodic}, see
also \cite{alvarez2011unified}. The condition
$\sum_n\lambda_n=+\infty$ is generally unsufficient to ensure the weak
convergence of the iterates $(y_n)$ unless additional assumptions on $\sA$
are made (typically, $\sA$ must be demi-positive).  A counterexample is
obtained when $\sA$ is a $\pi/2$-rotation in the 2D-plane and
$\sum_n\lambda_n^2<\infty$. 
However, the condition $\sum_n\lambda_n=+\infty$ is sufficient to
ensure that $y_n$ converges weakly \emph{in average} to a zero of~$\sA$. Here, by 
\pb{weak convergence in average, or weak ergodic convergence,} we mean that the weighted averaged
sequence
$$
\overline y_n = \frac{\sum_{k=1}^n\lambda_k y_k}{\sum_{k=1}^n\lambda_k}
$$
converges weakly to a zero of $\sA$.

This paper extends the above result to the case where the operator $\sA$
is no longer fixed but is replaced at each iteration $n$ by one operator 
randomly chosen amongst a collection $(A(s,\,.\,): s\in E)$ of maximal monotone operators. 
We study the random sequence $(x_n)$ given by
\begin{equation}
x_{n+1} = (I+\lambda_{n} A({\xi_{n+1}},\,.\,))^{-1}x_n\label{eq:algo-intro}
\end{equation}
where $(\xi_n)$ is an independent identically distributed sequence with
probability distribution $\mu$ on some probability space
$(\Omega,\cF,\bP)$.  We refer to the above iterations as the
stochastic proximal point algorithm.  Under mild assumptions on the
collection of operators, the random sequence $(x_n)$ generated by the
algorithm is shown to be bounded with probability one. The main result
is that almost surely, $(x_n)$ converges weakly in average to some random point
within the set of zeroes (assumed non-empty) of the \emph{mean operator} $\underline A$ defined by
$$
\underline A: x\mapsto  \int A(s,x)d\mu(s)
$$
where $\int$ represents the Aumann integral \cite[Chapter 8]{aubin2009set}.
While the operator $\underline A$ is always monotone,
our key assumption is that it is also maximal. 
This condition is satisfied in a number of particular cases. For instance
when the random variable $\xi_1$ belongs almost surely to a finite set, say $\{1,\dots, m\}$,$
\underline A(x)$ coincides with the Minkowski sum
$$
\underline A(x) = \sum_{i=1}^m \bP(\xi_1 = i)\, A(i,x)
$$
for every $x\in \cH$, and $\underline A$ is maximal under the sufficient condition that the interiors
of the domains of all operators $A(i,\,.\,)$ ($i=1,\dots,m$) have a non-empty intersection~\cite{rockafellar1970maximality}.

\subsection*{Related works and applications}

In the literature,  numerous works have been devoted to iterative algorithms 
searching for zeroes of a sum of maximal operators.
One of the most celebrated approach is the Douglas-Rachford algorithm
analyzed by \cite{lions1979splitting}. Though suited to a sum of two operators,
the Douglas-Rachford algorithm can be adapted to an arbitrary finite sum using the so-called product space trick.
The authors of \cite{brezis1978produits} and \cite{passty1979ergodic} consider applying
product of resolvents in a cyclic manner. Numerically, the above deterministic approaches 
become difficult to implement when the number of operators in the sum is large,
or a fortiori infinite (\emph{i.e.} the mean operator is an integral).
In parallel, stochastic approximation techniques
have been developped in the statistical literature to find a root of an integral functional $h:\cH\to\cH$
of the form $h(x) = \int H(s,x)d\mu(s)$.
The archetypal  algorithm writes $x_{n+1} = x_n-\lambda_{n} H(\xi_{n+1},x_n)$ 
as proposed in the seminal work of Robbins and Monro \cite{robbins1951stochastic}.
It turns out that the iterates~(\ref{eq:algo-intro}) have a similar form
$$x_{n+1} = x_n-\lambda_{n} A_{\lambda_{n}}(\xi_{n+1},x_n)$$ where $A_\lambda(s,\,.\,)$
is the so-called Yosida approximation of the monotone operator $A(s,\,.\,)$.
As a matter of fact, our analysis borrows some proof ideas from 
the stochastic approximation literature~\cite{andrieu2005stability}.

Applications of stochastic approximation include the minimization of integral functionals
of the form $x\mapsto \bE(f(\xi_1,x))$ where $(f(s,\,.\,):s\in E)$ is a collection of proper lower-semicontinuous
convex functions on $\cH\to (-\infty,+\infty]$. 
\pb{We refer to \cite{nemirovski2009robust} or to \cite{bertsekas2010survey} for a survey.
In particular, the benefits in terms of convergence rate of considering
average iterates $\bar x_n = \sum_{k\leq n}\gamma_kx_k/\sum_{k\leq n}\gamma_k$ 
is established by \cite{nemirovski2009robust} in the context of convex programming
and in \cite{juditsky2011solving} in the context of variational inequalities.
Averaging of the iterates is introduced in these works (see also \cite{atc-for-mou-14} for more recent results)
where improved complexity results is the main motivator.}
For instance, the stochastic subgradient algorithm writes $x_{n+1} = x_n-\lambda_{n} \tilde \nabla f(\xi_{n+1},x_n)$
where $\tilde \nabla f(\xi_{n+1},x_n)$ represents a subgradient of $f(\xi_{n+1},\,.\,)$
at point $x_n$ (assumed in this case to be everywhere well defined). The algorithm is often analyzed under
a uniform boundedness assumption of the subgradients \cite{nemirovski2009robust}, \cite{bertsekas2010survey}.
In practice, a reprojection step is often introduced to enforce the boundedness of the iterates.

Denoting by $A(s,\,.\,)$ the subdifferential of $f(s,\,.\,)$,
the resolvent $(I+\lambda A(s,\,.\,))^{-1}$ coincides with the proximity operator associated with $f(s,\,.\,)$
given by
\begin{equation}
\prox_{\lambda f(s,\,.\,)}(x) = \arg\min_{t\in \cH}  \lambda f(s,t)+ \frac{\| t-x\|^2}2
\label{eq:proximity-op}
\end{equation}
for any $x\in \cH$. The iterations~(\ref{eq:algo-intro}) can be equivalently written as
\begin{equation}
x_{n+1} = \prox_{\lambda_{n} f(\xi_{n+1},\,.\,)}(x_n)\,. \label{eq:intro-ite-prox}
\end{equation}
A related algorithm is studied (among others) by Bertsekas in \cite{bertsekas2011incremental}
under the assumption that $\xi_1$ has a finite range and $f(s,\,.\,)$ is defined
on $\bR^d\to\bR$. As functions are supposed to have full domain, 
\cite{bertsekas2011incremental} introduces a projection step onto a closed convex set
in order to cover the case of constrained minimization.
When there exists a constant $c$ such that
the functions  $f(s,\,.\,)$ are $c$-Lipschitz continuous for all $s$, and under other technical assumptions,
the algorithm of \cite{bertsekas2011incremental} is proved to converge to a sought minimizer.
In \cite{wang2013incremental}, the finite range assumption is dropped and random projections are introduced.
Extension to variational inequalities is considered in~\cite{wang2014incremental} (see also the discussion below).

\pb{An important aspect is related to the analysis of the convergence rates of the iterates~(\ref{eq:intro-ite-prox}).
The working draft~\cite{ryu2014stochastic} was brought to our knowledge 
during the review process of this paper.
The authors analyze a related algorithm and provide asymptotic convergence rates in the case 
where the monotone operators $A(s,\,.\,)$ are gradients of convex functions in $\bR^n$ and
assuming moreover that these functions have the same domain, are all strongly convex and twice differentiable.
}

\pb{In order to illustrate~(\ref{eq:algo-intro}), 
we provide some application examples without insisting on the hypotheses for the moment.

The simplest application example correspond to the following feasibility problem: given a collection
of closed convex sets $X_1,\dots, X_m$, find a point $x$ in their intersection 
$X=\bigcap_{i=1}^m X_i$.
The interest lies in the case where $X$ is not known but revealed through random
realizations of the $X_i$'s, so that a straightforward projection onto $X$ is unaffordable \cite{nedic2011random}, \cite{bauschke1996projection}.
The algorithm~(\ref{eq:intro-ite-prox}) encompasses this case by letting $f(\xi_{n+1},\,.\,)$ coincide 
with the indicator function $\iota_{X_{\xi_{n+1}}}$ of the set $X_{\xi_{n+1}}$ (equal to zero on that set and to $+\infty$ elsewhere), 
where $\xi_{n+1}$ is randomly chosen in the set $E=\{1,\dots,m\}$ according to some distribution
$\mu=\sum_{i=1}^m\alpha_i \delta_i$ where all the $\alpha_i$'s are positive and $\delta_i$ is the Dirac measure at $i$.
In this case, the algorithm~(\ref{eq:intro-ite-prox}) boils down to a special case of~\cite{nedic2011random} 
and consists in successive projections onto randomly selected sets. The algorithm is of particular interest when $m$ is large 
(our framework even encompasses the case of an infinite number of sets) or in the case of distributed optimization methods: in that case, $X_i$ is 
the set of local constraints of an agent $i$ and $X$ is nowhere observed \cite{bertsekas2010survey}.
As pointed out in \cite{nedic2011random}, examples of applications include fair rate allocation problems in wireless networks 
where $X_i$ represent a set of channel states~\cite{eryilmaz2005fair}, \cite{huang2009joint}, \cite{stolyar2005asymptotic}
or image restoration and tomography \cite{capricelli2007convex}, \cite{combettes1997convex}.

A generalization of the above feasibility problem is the programming problem
\begin{equation}
\min_x F(x)\ \text{ s.t. }\ x\in \bigcap_{i=1}^m X_i\label{eq:min-cons}
\end{equation}
where $F$ is a closed proper convex function. Here we set $\tilde
f(0,\,.\,)=F$ and $\tilde f (i,\,.\,)=\iota_{X_i}$ for $1\leq i\leq m$
and choose randomly the variable $\xi_{n+1}$ on the set $E=\{0,1\dots
m\}$ according to some discrete distribution $\sum_{i=0}^m
\tilde\alpha_i\delta_i$ for some positive coefficients $\tilde
\alpha_i$.  The use of algorithm~(\ref{eq:intro-ite-prox}) with $f$
replaced by $\tilde f$ leads to an algorithm where either
$\prox_{\lambda_n F}$ is applied to the current estimate or a
projection onto one of the sets $X_i$ is done, depending on the outcome
of $\xi_{n+1}$.  A refinement consists in assuming that the function
$F$ is itself an expectation of the form $F(x) = \bE(f(Z,x))$ for some
random variable $Z$.  In this case, the previous algorithm can be
extended by substituting $\prox_{\lambda_n F}$ with a random version
$\prox_{\lambda_n f(Z_{n+1},\,.\,)}$ where  $(Z_n)_n$ are
iid copies of $Z$. This example will be discussed in details in
Section~\ref{sec:appli}.

Apart from convex minimization problems, Algorithm (\ref{eq:algo-intro}) also finds applications in
minimax problems \emph{i.e.,} when the aim is to search for a saddle point of a given function $L$ 
\cite{cass1976structure}, \cite{rockafellar1970monotone}. 
Suppose that $\cH$ is a cartesian product of two Hilbert spaces $\cH_1\times\cH_2$
and define $\ell:E\times\cH\to [-\infty,+\infty]$ such that $\ell(s,x,y)$ is convex in $x$ and concave in $y$
and $\ell(s,\,.\,)$ is proper and closed in the sense of \cite{rockafellar1970monotone}.
Consider the problem of finding a saddle point $(x,y)$ of function $L = \bE(\ell(\xi_1, \,.\,))$
\emph{i.e.} $(x,y)\in \arg\minimax L$. For every $s\in E$ and $z\in \cH$ of the form $z=(x,y)$,
define $A(s,x,y)$ as the set of points $(u,v)$ such that for every $(x',y')$,
$$
\ell(s,x',y)-\ps{u,x'}+\ps{v,y}\geq 
\ell(s,x,y)-\ps{u,x}+\ps{v,y}\geq
\ell(s,x,y')-\ps{u,x}+\ps{v,y'}\,.
$$
In that case, the operator $A(s,\,.\,)$ is maximal monotone for every $s$, and
the stochastic proximal point algorithm~(\ref{eq:algo-intro}) reads 
$$
(x_{n+1},y_{n+1}) = \arg\minimax_{(x,y)}\ \ell(\xi_{n+1}, x,y)+\frac{\|x-x_n\|^2}{2\lambda_n}-\frac{\|y-y_n\|^2}{2\lambda_n}\,.
$$ 

As a further extension, Algorithm~(\ref{eq:algo-intro}) can be used to solve variational inequalities.
Let $X=\cap_{i=1}^m X_i$ be defined as above and consider the problem of finding $x^\star\in X$ such that
\begin{equation}
\forall x\in X,\ \ps{F(x^\star),x-x^\star}\geq 0\label{eq:VI}
\end{equation}
where $F:\cH\to\cH$ is monotone and, for simplicity, single-valued (extension to set-valued $F$
is also possible in our framework).  Applications of~(\ref{eq:VI}) are numerous. We refer to  \cite{kin-sta-(livre)00} for an overview.
Specific applications include game theory where typically, a Nash equilibrium has to be found amongst
users having individual constraints and observing possibly stochastic rewards \cite{scutari2014real}.
Other examples such as matrix minimax problems are described in \cite{juditsky2011solving}.
Similarly to the programming problem~(\ref{eq:min-cons}),
the application of the stochastic proximal point algorithm to the variational inequality~(\ref{eq:VI}) yields the following algorithm.
Depending on the outcome of a random variable $\xi_{n+1}\in \{0,\dots,m\}$, a projection
onto one of the sets $X_1,\dots,X_m$ is performed, or the resolvent $(I+\lambda_n F)^{-1}$
is applied to the current estimate. 

Also interesting is the case where the function $F$ in~(\ref{eq:VI}) is itself defined
as an expectation of the form $F(x)=\bE(f(Z,x))$ where $f$ is $\cH$-valued and $Z$ is a r.v.
In this case, the previous algorithm can be generalized by substituting the resolvent $(I+\lambda_n F)^{-1}$
with its stochastic counterpart $(I+\lambda_n f(Z_{n+1},\,.\,))^{-1}$ where $(Z_n)_n$ are iid copies of $Z$.
The context of stochastic variational inequalities 
is investigated by Juditsky \emph{et al.}, see \cite{juditsky2011solving} where a stochastic mirror-prox algorithm
is provided. The algorithm of \cite{juditsky2011solving}
uses general prox-functions and allows for a possible bias in the estimation of $F$.
In \cite{juditsky2011solving}, $X$ is supposed to be a compact subset of $\bR^N$, $m$ is equal to one, and $\|F(x)-F(y)\|_*\leq L\|x-y\|+M$
(for some arbitrary norm $\|\,.\,\|$ and the corresponding dual norm  $\|\,.\,\|_*$)  where
$L$, $M$ are constants that are known by the user. Moreover, a variance bound of the form
$\bE(\|f(Z,x)-F(x)\|^2)\leq \sigma^2$ is supposed to hold uniformly in $x$.
Then, using a constant step size depending on $L$, $M$
and the expected number of iterations of the algorithm, the authors prove that the algorithm
achieves optimal convergence rate. Note that the \emph{black-box} model used in the present paper
is different from \cite{juditsky2011solving} in the sense that we are making an implicit use of 
$f(Z_{n+1},\,.\,)$ instead of an explicit one as in \cite{juditsky2011solving}. In our work, 
this permits to prove the almost sure convergence of the algorithm under weaker assumptions than \cite{juditsky2011solving}.
On the other side, the price to pay with our approach is the absence 
of convergence rate certificates.
Also related to our framework is the recent work \cite{wang2014incremental}.
An algorithm similar to ours is proposed,
$F$ being moreover assumed to be strongly monotone and to verify the lipschitz-like property
$\E(\|f(Z,x)-f(Z,y)\|^2)\leq C\|x-y\|^2$.
These assumptions are not needed in our approach.

}

\subsection*{Organization and contributions} The paper is organized as follows.
After some preliminaries in Section~\ref{sec:preliminaries},
the main algorithm is introduced in Section~\ref{sec:algo}.
The aim of Section~\ref{sec:stability} is to establish that the algorithm is stable in the sense
that the sequence $(x_n)$ is bounded almost surely. We actually prove a stronger result: 
for any zero $x^\star$ of $\underline A$, the sequence $\|x_n-x^\star\|$ converges almost surely.
This point is the first key element to prove the weak convergence in average of the algorithm.
The second element is provided in Section~\ref{sec:cv} where it is shown that
any weak cluster point of the weighted averaged sequence $(\overline x_n)$ is a zero of $\underline A$.
Putting together these two arguments and using Opial's lemma~\cite{passty1979ergodic},
we conclude that, almost surely, $(\overline x_n)$ converges weakly to a zero of $\underline A$.
The proofs of Section~\ref{sec:cv} rely on two major assumptions. First, the operator 
$\underline A$ is assumed maximal, as discussed above. Second, the averaged sequence
of (random) Yosida approximations evaluated at the iterates is supposed to be uniformly integrable 
with probability one. The latter assumption is easily verifiable 
when all operators are supposed to have the same domain. 
The case where operators have different domains is more involved.
We introduce a linear regularity assumption of the set of domains
of the operators inspired by~\cite{bauschke1996projection}
(a similar assumption is also used in \cite{wang2013incremental}).
 We provide estimates of the distance 
between the iterate $x_n$ and the essential intersection of the domains.
The latter estimates allow to verify the uniform integrability condition, and
yield the almost sure weak convergence in average of the algorithm in the general case.

In Section~\ref{sec:appli}, we study applications to convex programming. We use our results 
to prove weak convergence in average of $(x_n)$ given by~(\ref{eq:intro-ite-prox})
to a minimizer of $x\mapsto \bE(f(\xi_1,x))$. As an illustration, we address the problem
$$
\min\  \bE(f(\xi_1,x)) \ \text{w.r.t. }{x\in \bigcap_{i=1}^m X_i}
$$
where $X_1,\dots,X_m$ are closed convex sets of $\bR^d$ and $f(s,\,.\,)$ is a convex function
on $\cH\to\bR$ for each $s\in E$. We propose a random algorithm quite 
similar to~\cite{wang2013incremental} and whose 
convergence in average can be established under verifiable conditions.

\section{Preliminaries}
\label{sec:preliminaries}

\subsection*{Random closed sets} 

Let $\cH$ be a separable Hilbert space (identified with its dual) equipped with its Borel $\sigma$-algebra $\cB(\cH)$.
We denote by $\|x\|$ the Euclidean norm of any $x\in \cH$ and by ${ d}(x,Q)  = \inf\{\|y-x\| : y\in Q\}$
the distance between a point $x\in \cH$ and a set $Q\in 2^\cH$ (equal to $+\infty$ when $Q=\emptyset$).
We denote by $\mathrm{cl}(Q)$ the closure of $Q$.
We note $|Q| = \sup\{\|x\| : x\in Q\}$. 

Let $(T,\cT)$ be a measurable space. Let $\Gamma:T\to 2^\cH$ be a multifunction such that
$\Gamma(t)$ is a closed set for all $t\in T$. The domain of $\Gamma$ is denoted by
$\dom(\Gamma)=\{t\in T:\,\Gamma(t)\neq\emptyset\}$. The graph of $\Gamma$ is denoted by
$\graph(\Gamma) = \{(t,x)\,:\,x\in \Gamma(t)\}$.

We say that $\Gamma$ is $\cT$-measurable (or Effros-measurable)
if $\{ t\in T:\,\Gamma(t)\cap U\neq \emptyset\}\in \cT$ for each open set $U\subset \cH$.
This is equivalent to say that for any $x\in \cH$, the mapping $t\mapsto { d}(x,\Gamma(t))$ 
is a random variable \cite{castaing1977convex}, \cite{molchanov}.
We say that $\Gamma$ is graph-measurable if $\graph(\Gamma)\in \cT\otimes \cB(\cH)$.
Effros-measurability implies graph measurability and the converse is true
if $(T,\cT)$ is complete for some $\sigma$-finite 
measure~\cite[Chapter III]{castaing1977convex}, \cite[Theorem 2.3, pp.28]{molchanov}.

Given a probability measure $\nu$ on $(T,\cT)$,
a function $\phi:T\to\cH$ is called a measurable selection of $\Gamma$ if 
$\phi$ is $\cT/\cB(\cH)$-measurable and if $\phi(t)\in \Gamma(t)$ for all $t$ $\nu$-a.e.
We denote by $\cS(\Gamma)$ the set of measurable selections of $\Gamma$.
If $\Gamma$ is measurable, the measurable selection theorem states that 
$\cS(\Gamma)\neq\emptyset$ if and only 
if $\Gamma(t)\neq \emptyset$ for all~$t$~$\nu$-a.e. \cite[Theorem 2.13, pp.32]{molchanov}, \cite[Theorem 8.1.3]{aubin2009set}.
For any $p\geq 1$, we denote by $L^p(T,\cH,\nu)$ the set of measurable
functions $\phi:T\to\cH$ such that $\int \|\phi\|^pd\nu<\infty$.
We set $\cS^p(\Gamma) = \cS(\Gamma)\cap L^p(T,\cH,\nu)$.
The Aumann integral of the measurable map $\Gamma$ is the set 
$$
\int \Gamma d\nu = \left\{\int \phi\, d\nu\,:\,\phi\in \cS^1(\Gamma)\right\}
$$
where $\int \phi\, d\nu$ is the Bochner integral of $\phi$.

\subsection*{Monotone operators}

An operator $\sA:\cH\to 2^\cH$ is said monotone if $\forall (x,y)\in
\graph(\sA)$, $\forall (x',y')\in \graph(\sA)$, $\ps{y-y',x-x'}\geq 0$.
\pb{It is said strongly monotone with modulus $\alpha$ if the inequality
$\ps{y-y',x-x'}\geq 0$ can be replaced by  
$\ps{y-y',x-x'}\geq \alpha \|x-x'\|^2$.}
The operator $\sA$ is {maximal monotone} if it is monotone and if for
any other monotone operator $\sA':\cH\to 2^\cH$, $\graph (\sA)\subset
\graph(\sA')$ implies $\sA=\sA'$. A maximal monotone operator $\sA$
has closed convex images and $\graph(\sA)$ is closed
\cite[pp. 300]{bauschke2011convex}.
We denote the identity by $I:x\mapsto x$.  For some $\lambda>0$, the
resolvent of $\sA$ is the operator $\sJ_\lambda = (I+\lambda
\sA)^{-1}$ or equivalently: $y\in \sJ_\lambda(x)$ if and only if
$(x-y)/\lambda \in \sA(y)$. The Yosida approximation of $\sA$ is the
operator $\sA_\lambda = (I-\sJ_\lambda)/\lambda$. Assume from now on
that $\sA$ is a maximal monotone operator. Then $\sJ_\lambda$ is a
single valued map on $\cH\to\cH$ and is firmly non-expansive in the
sense that $\ps{\sJ_\lambda(x)-\sJ_\lambda(y),x-y}\geq
\|\sJ_\lambda(x)-\sJ_\lambda(y)\|^2$ for every $(x,y)\in \cH^2$.  The
Yosida approximation $\sA_\lambda$ is $1/\lambda$-Lipschitz continuous
and satisfies $\sA_\lambda(x)\in \sA(\sJ_\lambda(x))$ for every $x\in
\cH$ \cite{minty1962monotone}, \cite[Corollary 23.10]{bauschke2011convex}. 
For any $x\in \dom(\sA)$, we denote by
$\sA_0(x)$ the element of least norm in $\sA(x)$ \emph{i.e.},
$\sA_0(x)=\mathrm{proj}_{\sA(x)}(0)$ where $\mathrm{proj}_C$
represents the projection operator onto a closed convex set $C$.  When
$\sA$ is maximal monotone and $x\in \dom(\sA)$, then
$\|\sA_\lambda(x)\| \leq \|\sA_0(x)\|$.  In that case,
$\sA_\lambda(x)$ and $\sJ_\lambda (x)$ respectively converge to
$\sA_0(x)$ and $x$ as $\lambda\downarrow 0$ \cite[Section 23.5]{bauschke2011convex}.

\pb{ 
\subsection*{Random convex functions}

A function $f:E\times \cH\to (-\infty,+\infty]$ is called a normal convex integrand if it 
is $\cE\otimes \cB(\cH)$-measurable and if $f(s,\,.\,)$ 
is lower semicontinuous proper and convex for each $s\in E$ \cite{rockafellar1971convex}.
For such a function $f$, we define 
\begin{equation}
F(x) =\int f(s,x)d\mu(s)\label{eq:F}
\end{equation}
where the above integral is defined as the sum
$$
\int f(s,x)^+ d\mu(s) - \int f(s,x)^- d\mu(s) 
$$
where we use the notation $a^\pm= \max(\pm a,0)$ and the convention $(+\infty)-(+\infty)=+\infty$.
The subdifferential operator $\partial f:E\times \cH\to\cH$ is defined for all $(s,x)\in E\times\cH$ by
$$\partial f(s,x) = \{u\in \cH\,:\forall y\in \cH, f(s,y)\geq f(s,x)+\ps{u,y-x}\}\,.$$
}

\section{Algorithm}
\label{sec:algo}

\subsection{Description}

Let $(E,\cE,\mu)$ be a complete probability space and let $\cH$ 
be a separable Hilbert space equipped with its Borel $\sigma$-algebra $\cB(\cH)$.
Consider a mapping $A:E\times\cH \to 2^\cH$ and define
for any $\lambda>0$, the resolvent and the Yosida approximation
of $A$ as the mappings $J_\lambda$ and $A_\lambda$ respectively defined on $E\times\cH \to 2^\cH$ by
\begin{align*}
  J_\lambda(s,x) &= (I+\lambda A(s,\,.\,))^{-1}(x) \\
  A_\lambda(s,x) &= (x-J_\lambda(s,x))/\lambda
\end{align*}
for all $(s,x)\in E\times \cH$. 

\begin{assum}
\label{hyp:A}
\hfill 
\begin{enumerate}[(i)]
\item For every $s\in E$ $\mu$-a.e., $A(s,\,.\,)$ is maximal monotone.  
\item For any $\lambda>0$ and $x\in \cH$, $J_\lambda(\,.\,,x)$ is $\cE/\cB(\cH)$-measurable.
\end{enumerate}
\end{assum}
\pb{By~\cite[Lemme 2.1]{attouch1979familles}, the second point is equivalent
to the assumption that $A$ is $\cE\otimes \cB(\cH)$-Effros measurable. 
Also, by the same result, the statement ``for any $\lambda>0$'' in Assumption \ref{hyp:A}(ii) can be equivalently replaced
by ``there exists $\lambda>0$''.}
 As $A(s,\,.\,)$ is maximal monotone, $J_\lambda(s,\,.\,)$ is a
  single-valued continuous map for each $s\in E$. Thus, $J_\lambda$ is a
  Carathéodory map.  As such,
  $J_\lambda$ is  $\cE\otimes \cB(\cH) /\cB(\cH)$-measurable by \cite[Lemma 8.2.6]{aubin2009set}.

Consider an other probability space $(\Omega,\cF,\bP)$ and let $(\xi_n:n\in \bN^*)$ be a sequence of random variables
on $\Omega\to E$. For an arbitrary initial point $x_0\in \cH$ (assumed fixed throughout the paper), we consider the following iterations
\begin{equation}
  \label{eq:algo}
  x_{n+1} = J_{\lambda_{n}}(\xi_{n+1},x_n)\,.
\end{equation}

\begin{assum}
\label{hyp:step-iid}
\hfill 
\begin{enumerate}[(i)]
\item The sequence $(\lambda_n:n\in \bN)$ is positive and belongs
  to $\ell^2\backslash \ell^1$.
\item The random sequence $(\xi_n:n\in \bN^*)$ is independent and identically distributed with probability distribution $\mu$.
\end{enumerate}
\end{assum}
Let $\cF_n$ be the $\sigma$-algebra generated by the r.v. $\xi_1,\dots,\xi_n$.
We denote by $\bE$ the expectation on $(\Omega,\cF,\bP)$ and by
$\bE_n = \bE(\,.\,|\cF_n)$ the conditional expectation w.r.t. $\cF_n$. 


\subsection{Mean operator}

For any $x\in \cH$, we define $S_A(x) = \cS(A(\,.\,,x))$ as the set of measurable selections of $A(\,.\,,x)$.
We define similarly $S_A^p(x) = \cS^p(A(\,.\,,x))$.
For each $s\in E$, we set
$
D_s = \dom(A(s,\,.\,)).
$
Following \cite{hiriart1977contributions}, we define the 
essential intersection (or continuous intersection) of the domains $D_s$ as
$$
\cD = \bigcup_{N\in \mathscr{N}} \bigcap_{s\in E\backslash N} D_s
$$
where $\mathscr{N}$ is the set of $\mu$-negligible subsets of $E$.
Otherwise stated, a point $x$ belongs to $\cD$ if $x\in D_s$ for every
$s$ outside a negligible set. We define
$$
\underline A(x) = \int A(s,x)d\mu(s)\,.
$$
For any $s\in E$ and any $x\in D_s$, we define $A_0(s,x) = \mathrm{proj}_{A(s,x)}(0)$ as the element of least norm in $A(s,x)$.
\begin{lemma}
  \label{lem:Aumann}
Under Assumption~\ref{hyp:A},  $\underline A$ is monotone and has convex values.
Moreover, if $\int \|A_0(s,x)\|d\mu(s)<\infty$ for all $x\in\cD$, then $$\dom(\underline A)=\cD\,.$$  
\end{lemma}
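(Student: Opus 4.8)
The plan is to handle the three assertions separately, in each case reducing a statement about the Aumann integral $\underline A(x)=\int A(s,x)\,d\mu(s)$ to a pointwise statement about the $\mu$-a.e.\ selections $\phi\in\cS^1(A(\,.\,,x))$ that represent its elements, and then integrating.

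\textbf{Monotonicity and convex values.} First I would fix $x,x'\in\cH$ and take $u\in\underline A(x)$, $u'\in\underline A(x')$, so that $u=\int\phi\,d\mu$ and $u'=\int\phi'\,d\mu$ for selections $\phi\in\cS^1(A(\,.\,,x))$ and $\phi'\in\cS^1(A(\,.\,,x'))$. Since $A(s,\,.\,)$ is monotone for $\mu$-a.e.\ $s$ by Assumption~\ref{hyp:A}(i), one has $\ps{\phi(s)-\phi'(s),x-x'}\geq0$ a.e.; commuting the continuous linear functional $\ps{\,.\,,x-x'}$ with the Bochner integral then gives $\ps{u-u',x-x'}=\int\ps{\phi(s)-\phi'(s),x-x'}\,d\mu(s)\geq0$, i.e.\ monotonicity. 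For convex values, fix $x$ and $u_0,u_1\in\underline A(x)$ with representing selections $\phi_0,\phi_1$, and set $\phi_t=(1-t)\phi_0+t\phi_1$ for $t\in[0,1]$. As a maximal monotone operator has convex images, $A(s,x)$ is convex a.e., so $\phi_t(s)\in A(s,x)$ a.e.; moreover $\phi_t\in L^1$ and is measurable, whence $\phi_t\in\cS^1(A(\,.\,,x))$ and $\int\phi_t\,d\mu=(1-t)u_0+tu_1\in\underline A(x)$. Neither of these two points uses the integrability hypothesis.

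\textbf{The domain identity.} For the inclusion $\dom(\underline A)\subseteq\cD$, observe that $x\in\dom(\underline A)$ means $\cS^1(A(\,.\,,x))\neq\emptyset$, so some $\phi$ satisfies $\phi(s)\in A(s,x)$ for $\mu$-a.e.\ $s$; in particular $A(s,x)\neq\emptyset$ for a.e.\ $s$, which is exactly $x\in\cD$. This direction needs neither a selection theorem nor integrability. For the reverse inclusion $\cD\subseteq\dom(\underline A)$, I would exhibit an explicit integrable selection at $x\in\cD$, namely the minimal selection $s\mapsto A_0(s,x)$. Its measurability I would derive from the resolvent: $A_\lambda(\,.\,,x)=(x-J_\lambda(\,.\,,x))/\lambda$ is $\cE$-measurable for each $\lambda>0$ by Assumption~\ref{hyp:A}(ii), and for a.e.\ $s$ (those with $x\in D_s$ and $A(s,\,.\,)$ maximal monotone) one has $A_\lambda(s,x)\to A_0(s,x)$ as $\lambda\downarrow0$; letting $\lambda$ run through a sequence tending to $0$ then exhibits $A_0(\,.\,,x)$ as an a.e.\ pointwise limit of measurable maps, hence measurable. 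Integrability is exactly the standing assumption $\int\|A_0(s,x)\|\,d\mu(s)<\infty$. Thus $A_0(\,.\,,x)\in\cS^1(A(\,.\,,x))$ and $\int A_0(s,x)\,d\mu(s)\in\underline A(x)$, giving $x\in\dom(\underline A)$.

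\textbf{Main obstacle.} The monotonicity and convexity assertions, and the easy inclusion $\dom(\underline A)\subseteq\cD$, are essentially bookkeeping on selections. The only genuinely delicate point is the measurability of the minimal selection $A_0(\,.\,,x)$, which I secure through the convergence $A_\lambda(s,x)\to A_0(s,x)$ together with the measurability of the resolvent (the bound $\|A_\lambda(s,x)\|\leq\|A_0(s,x)\|$ also makes each $A_\lambda(\,.\,,x)$ integrable under the hypothesis); once measurability is in hand the integrability assumption closes the argument at once. I would also take care to gather all exceptional ``a.e.'' sets — where $A(s,\,.\,)$ fails to be maximal monotone, or where $x\notin D_s$ — into a single $\mu$-negligible set and define the selections arbitrarily there, which alters neither their measurability nor the value of the integral.
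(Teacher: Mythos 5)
Your proposal is correct and follows essentially the same route as the paper: the substantive step in both is the measurability of the minimal selection $A_0(\,.\,,x)$ as a pointwise limit of the Yosida approximations $A_\lambda(\,.\,,x)$ (measurable via the resolvent by Assumption~\ref{hyp:A}(ii)) as $\lambda\downarrow 0$, after which the standing integrability hypothesis yields $\cD\subset\dom(\underline A)$, while the converse inclusion and the monotonicity/convexity claims are routine bookkeeping on selections. The only cosmetic differences are that you spell out the parts the paper dismisses as clear and you correctly observe that the inclusion $\dom(\underline A)\subset\cD$ needs only the trivial direction rather than the measurable selection theorem the paper cites for the identity $\cD=\dom(S_A)$.
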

\begin{proof}
The first point is clear.
For any $x\in\cD$, $A_0(\,.\,,x)$ is well defined $\mu$-a.e. and is measurable as the pointwise limit of measurable functions
$A_\lambda(\,.\,,x)$ for $\lambda\downarrow 0$.
By the measurable selection theorem, $\cD=\dom(S_A)$.
On the other hand, $\dom(\underline A) = \dom(S_A^1)\subset \cD$.
For any $x\in \cD$, $A_0(\,.\,,x)$ is an integrable selection of $A(\,.\,,x)$ by the standing hypothesis. 
Thus, $x\in \dom(\underline A)$. As a consequence, $\cD\subset \dom(\underline A)$.
\end{proof}

\begin{example}
\label{ex:fini}
  Consider the case where $\mu$ is a finitely supported measure, say $\mathrm{supp}(\mu)=\{1,\dots,m\}$
for some integer $m\geq 1$. Set $w_i=\mu(\{i\})$ for each $i$. 
Then $\underline A = \sum_{i=1}^m w_i A(i,\,.\,)$ and its domain is equal to
$$\cD = \bigcap_{i=1}^m D_i\,.$$
Moreover, if the  interiors of the respective sets $D_1,\dots, D_m$
have a non-empty intersection, then $\underline A$ is maximal by~\cite{rockafellar1970maximality}.
\end{example}


\begin{example}
  Set $\cH=\bR^d$. Assume $A$ is non-empty valued and for all $x\in \cH$,
$|A(\,.\,,x)|\leq g(.)$ for some $g\in L^1(E,\bR,\mu)$.
Then $\underline A$ is non-empty (convex) valued and has a closed graph by \cite{yannelis1990upper}.
Thus $\underline A$ is maximal monotone by \cite[pp. 45]{barbu2010nonlinear}.
\end{example}

\pb{ 
\begin{example}
Let $f:E\times\cH\to(-\infty,+\infty]$ be a normal convex integrand and assume that its integral functional
$F$ given by~(\ref{eq:F}) is proper. Then $F$ is convex and lower semicontinuous \cite{walkup1967stochastic}.
Let $A(s,x) = \partial f(s,x)$. Assume that the interchange between expectation and subdifferential operators holds \emph{i.e.},
$$
\int \partial f(s,x)d\mu(s) = \partial \int \! f(s,x)d\mu(s)\,,
$$
otherwise stated, $\underline A(x) = \partial F(x)$. Then, as $F$ is proper convex and lower semicontinuous,
it follows that $\underline A$ is maximal monotone  \cite[Theorem 21.2]{bauschke2011convex}. 
Sufficient conditions for the interchange can be found in \cite{rockafellar1982interchange}.
Assume that $F(x)<+\infty$ for every $x$ such that $x\in \dom f(s,\,.\,)$ $\mu$-almost everywhere.
Suppose that $F$ is continuous at some point 
and that the set valued function $s\mapsto \mathrm{cl} (\dom f(s,\,.\,))$ 
is constant almost everywhere. Then the identity $\underline A(x) = \partial F(x)$ holds.
\end{example}
}

We denote by $\zer(\underline A)=\{x\in \cH :0\in \underline A(x)\}$ the set of zeroes of $\underline A$. We define
for each $p\geq 1$
$$
\cZ_A(p) = \{x\in \cH\,:\,\exists\,\phi\in S_A^p(x)\,:\,\textstyle{\int \phi\,d\mu}=0\}\,. 
$$
For any $p\geq 1$,  $\cZ_A(p)\subset \cZ_A(1)$ and $\cZ_A(1)=\zer(\underline A)$. 

\pb{
\subsection{Outline of the proofs}
Before going into the details, we first provide an informal overview of the proof structure without insisting on the hypotheses for the moment.

We start by showing two separate results in Sections~\ref{sec:stability-1} and~\ref{sec:stability-2} respectively, which we merge in Section~\ref{sec:stability-3}.
The first result (Proposition~\ref{prop:stability}) states that almost surely, $\lim_{n\to\infty} \|x_n-x^\star\|$ exists for every $x^\star\in \cZ_A(2)$.
In particular, sequence $(x_n)$ is bounded with probability one, whenever $\cZ_A(2)$ is non-empty.
The second result (Theorem~\ref{th:CVcond}) states the following: when $\underline A$ is maximal, all weak cluster points of 
the averaged sequence $(\overline x_n)$ are zeroes of $\underline A$, almost surely on the event
\begin{equation}
\left\{ \omega\,:\,n\mapsto \frac{\sum_{k\leq n}\|A_{\lambda_k}(\,.\,,x_k(\omega))\|}{\sum_{k\leq n}\lambda_k}\text{ is uniformly integrable}\right\}\,.
\label{eq:eventUI}
\end{equation}
Assuming that $\zer(\underline A)\subset \cZ_A(2)$, the above results can be put together by straightforward application 
of Opial's lemma (see Lemma~\ref{lem:opial}).
Almost surely on the event~(\ref{eq:eventUI}), $(\overline x_n)$ converges weakly to a point in $\zer(\underline A)$.
The latter result is stated in Theorem~\ref{th:opial}. 
In order to complete the convergence proof, the aim is therefore to provide verifiable conditions under which the event~(\ref{eq:eventUI})
is realized almost surely. This point is addressed in Section~\ref{sec:cv}.

Checking that~(\ref{eq:eventUI}) holds w.p.1 is relatively easy in the special case where the domains $D_s$ are all equal to the same set $\cD$.
Using the inequality $\|A_{\lambda_k}(\,.\,,x_k)\|\leq \|A_{0}(\,.\,,x_k)\|$ and assuming that for every bounded set $K$, the family of measurable functions 
$(\|A_0(\,.\,,x)\|)_{x\in K\cap\cD}$ is uniformly integrable, the result follows (see Corollary~\ref{coro:CVdom}).
On the other hand, when the domains $D_s$ are not equal to the same set $\cD$, more developments are needed to prove that the event~(\ref{eq:eventUI}) is indeed realized w.p.1.
This point is addressed in Section~\ref{sec:distinct-domains} and the main result of the paper is eventually provided in Theorem~\ref{the:main}.
As opposed to the case of identical domains, the difficulty comes from the fact that the inequality 
$\|A_{\lambda_k}(\,.\,,x_k)\|\leq \|A_{0}(\,.\,,x_k)\|$ holds only if $x_k\in \cD$, which has no reason to be satisfied in the case of different domains. 
Instead, a solution is to pick some $z_k\in\cD$ close enough to $x_k$ in the sense that $\|z_k-x_k\|\leq 2d(x_k,\cD)$. 
Using that $A_\lambda(s,\,.\,)$ is $1/\lambda$-lipschitz continuous for every $s$, one has
\begin{equation}
\|A_{\lambda_k}(s,x_k)\|\leq \|A_{\lambda_k}(s,z_k)\| + \frac{2d(x_k,\cD)}{\lambda_k}\,.\label{eq:separationAlambda}
\end{equation}
As $z_k\in \cD$, the inequality $\|A_{\lambda_k}(\,.\,,z_k)\|\leq \|A_{0}(\,.\,,z_k)\|$ can be used and the first term
in the righthand side of~(\ref{eq:separationAlambda}) can be handled similarly to the previous case where the 
domains $D_s$ were assumed identical. In order to establish that (\ref{eq:eventUI}) is realized w.p.1, the remaining task 
is therefore to provide an estimate of the second term $\frac{d(x_k,\cD)}{\lambda_k}$. The latter estimate is provided
in Proposition~\ref{prop:dsum} which deeply relies on the mathematical developments of Lemma~\ref{lem:proj-perturb}.

In Section~\ref{sec:appli}, we particularize the algorithm to the case of convex programming. The proofs
of the section mainly consist in checking the conditions of application of the results of Section~\ref{sec:cv}.
}

\section{Stability and cluster points}
\label{sec:stability}

The following simple Lemma will be used twice.
\begin{lemma}
Let Assumption~\ref{hyp:A} hold true. Consider $u\in \cH$, $\phi\in S^1_A(u)$, $x\in \cH$, $\lambda>0$, $\beta>0$. Then,
for every $s$ $\mu$-a.e.,
\begin{equation}
  \ps{A_\lambda(s,x)-\phi(s),x-u} \geq  \lambda (1-\beta)  \|A_{\lambda}(s,x)\|^2
-\frac{\lambda}{4\beta}\|\phi(s)\|^2\,.
\label{eq:ps}
\end{equation}
\label{lem:ps}
\end{lemma}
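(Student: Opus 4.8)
The plan is to exploit the characterizing inclusion $A_\lambda(s,x)\in A(s,J_\lambda(s,x))$ recorded in the preliminaries, together with the defining membership $\phi(s)\in A(s,u)$, which holds $\mu$-a.e.\ because $\phi\in S^1_A(u)$. Fix an $s$ for which $A(s,\,.\,)$ is maximal monotone (Assumption~\ref{hyp:A}(i)) and both memberships hold; the conjunction of these full-measure conditions excludes only a $\mu$-negligible set, which is exactly the quantifier in the statement. Since $A_\lambda(s,x)$ and $\phi(s)$ are images under $A(s,\,.\,)$ of the points $J_\lambda(s,x)$ and $u$ respectively, monotonicity of $A(s,\,.\,)$ yields
$$
\ps{A_\lambda(s,x)-\phi(s),\,J_\lambda(s,x)-u}\geq 0\,.
$$

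Next I would substitute the identity $J_\lambda(s,x)=x-\lambda A_\lambda(s,x)$, which follows directly from the definition $A_\lambda=(I-J_\lambda)/\lambda$, so that $J_\lambda(s,x)-u=(x-u)-\lambda A_\lambda(s,x)$. Expanding the inner product and isolating the term in $x-u$ gives
$$
\ps{A_\lambda(s,x)-\phi(s),\,x-u}\ \geq\ \lambda\,\ps{A_\lambda(s,x)-\phi(s),\,A_\lambda(s,x)}\,.
$$
Expanding the right-hand side produces $\lambda\|A_\lambda(s,x)\|^2-\lambda\ps{\phi(s),A_\lambda(s,x)}$, so it only remains to bound the cross term $\ps{\phi(s),A_\lambda(s,x)}$ from above.

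For this last step I would apply Cauchy--Schwarz followed by the weighted Young inequality with parameter $\beta$: for any $\beta>0$,
$$
\ps{\phi(s),A_\lambda(s,x)}\ \leq\ \|\phi(s)\|\,\|A_\lambda(s,x)\|\ \leq\ \beta\|A_\lambda(s,x)\|^2+\frac{1}{4\beta}\|\phi(s)\|^2\,,
$$
the second inequality being merely the nonnegativity of the square $(\sqrt\beta\,\|A_\lambda(s,x)\|-\tfrac{1}{2\sqrt\beta}\|\phi(s)\|)^2\geq 0$. Multiplying by $\lambda$ and substituting into the previous display collects the terms into $\lambda(1-\beta)\|A_\lambda(s,x)\|^2-\tfrac{\lambda}{4\beta}\|\phi(s)\|^2$, which is exactly the claimed inequality~(\ref{eq:ps}).

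I do not expect a genuine obstacle: the argument is a short deterministic computation carried out pointwise in $s$, resting entirely on the monotonicity of $A(s,\,.\,)$ and the resolvent/Yosida identities. The only points needing care are the measure-theoretic bookkeeping described in the first paragraph and the correct calibration of the splitting parameter so that the coefficients $1-\beta$ and $1/(4\beta)$ emerge as written.
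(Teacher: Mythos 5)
Your proposal is correct and follows essentially the same route as the paper's proof: the monotonicity inequality $\ps{A_\lambda(s,x)-\phi(s),J_\lambda(s,x)-u}\geq 0$ applied at the points $J_\lambda(s,x)$ and $u$, the substitution $x-J_\lambda(s,x)=\lambda A_\lambda(s,x)$, and the weighted Young bound $\ps{a,b}\leq \beta\|a\|^2+\frac{1}{4\beta}\|b\|^2$ with $a=A_\lambda(s,x)$, $b=\phi(s)$. The only cosmetic difference is that you insert an explicit Cauchy--Schwarz step before the Young inequality, which the paper absorbs into a single bound.
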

\begin{proof}
As $\ps{A_{\lambda}(s,x)-\phi(s),J_\lambda(s,x)-u}\geq 0$ for all $s$ $\mu$-a.e., we obtain
\begin{align*}
  \ps{A_\lambda(s,x)-\phi(s),x-u} &\geq  \ps{A_{\lambda}(s,x)-\phi(s),x-J_\lambda(s,x)} \\
 &=  \lambda  \ps{A_{\lambda}(s,x)-\phi(s),A_\lambda(s,x)}\\
& =   \lambda \|A_{\lambda}(s,x)\|^2-\lambda \ps{\phi(s),A_\lambda(s,x)}\,.
\end{align*}
Use $\ps{a,b}\leq \beta\|a\|^2+\frac 1{4\beta}\|b\|^2$ with $a=A_\lambda(s,x)$ and $b=\phi(s)$, the result is proved.
\end{proof}

\subsection{Boundedness} 
\label{sec:stability-1}
The following proposition establishes that the stochastic proximal point algorithm is stable whenever $\cZ_A(2)$ is non-empty.

\begin{prop}
 Let Assumptions~\ref{hyp:A},~\ref{hyp:step-iid} hold true.
Suppose $\cZ_A(2)\neq\emptyset$ and let $(x_n)$ be defined by~(\ref{eq:algo}). Then,
\begin{enumerate}[(i)]
\item There exists an event $B\in\cF$ such that $\bP(B)=1$ and for every $\omega\in B$ and every $x^\star\in \cZ_A(2)$,
the sequence $(\|x_n(\omega)-x^\star\|)$ converges as $n\to\infty$.
\item $\bE(\sum_n\lambda_n^2\int\|A_{\lambda_n}(s,x_n)\|^2d\mu(s))<\infty$,
\item For any $p\in \bN^*$ such that $\cZ_A(2p)\neq\emptyset$, $\sup_n \bE(\|x_n\|^{2p})<\infty.$
\end{enumerate}
\label{prop:stability}
\end{prop}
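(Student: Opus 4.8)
The whole analysis hinges on a Robbins--Siegmund-type supermartingale argument, so my first step would be to derive a recursion for the squared distance $\|x_{n+1}-x^\star\|^2$ for a fixed $x^\star\in\cZ_A(2)$. Since $x_{n+1}=J_{\lambda_n}(\xi_{n+1},x_n)=x_n-\lambda_n A_{\lambda_n}(\xi_{n+1},x_n)$, expanding gives
\begin{equation}
\|x_{n+1}-x^\star\|^2 = \|x_n-x^\star\|^2 - 2\lambda_n\ps{A_{\lambda_n}(\xi_{n+1},x_n),x_n-x^\star} + \lambda_n^2\|A_{\lambda_n}(\xi_{n+1},x_n)\|^2.
\end{equation}
Because $x^\star\in\cZ_A(2)$, there is a selection $\phi\in S_A^2(x^\star)$ with $\int\phi\,d\mu=0$. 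The natural move is to take conditional expectation $\bE_n$ and exploit the i.i.d.\ structure (Assumption \ref{hyp:step-iid}(ii)): since $\xi_{n+1}$ is independent of $\cF_n$ and $x_n$ is $\cF_n$-measurable, $\bE_n\ps{A_{\lambda_n}(\xi_{n+1},x_n),x_n-x^\star}=\int\ps{A_{\lambda_n}(s,x_n),x_n-x^\star}d\mu(s)$, and similarly for the quadratic term. Here I would insert Lemma \ref{lem:ps} with $u=x^\star$, $\phi$ the selection above, and a suitable $\beta$: integrating \eqref{eq:ps} against $\mu$ and using $\int\phi\,d\mu=0$ yields a lower bound on $\int\ps{A_{\lambda_n}(s,x_n),x_n-x^\star}d\mu(s)$ in terms of $\lambda_n(1-\beta)\int\|A_{\lambda_n}(s,x_n)\|^2d\mu(s)$ minus a term controlled by $\int\|\phi\|^2d\mu$, which is finite precisely because $\phi\in L^2$.

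Plugging this into the conditional recursion should produce an inequality of the form
\begin{equation}
\bE_n\|x_{n+1}-x^\star\|^2 \leq \|x_n-x^\star\|^2 - c\,\lambda_n^2\int\|A_{\lambda_n}(s,x_n)\|^2d\mu(s) + C\,\lambda_n^2
\end{equation}
for constants $c>0$, $C\geq 0$ arising from the choice of $\beta$, where the $\lambda_n^2\|A_{\lambda_n}\|^2$ term that appears with a negative sign after combining the $-2\lambda_n\ps{\cdot}$ bound with the $+\lambda_n^2\|A_{\lambda_n}\|^2$ error term. The key budget is that $(\lambda_n)\in\ell^2$ (Assumption \ref{hyp:step-iid}(i)), so $\sum_n C\lambda_n^2<\infty$. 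The Robbins--Siegmund lemma then applies on the nonnegative adapted sequence $\|x_n-x^\star\|^2$: it gives simultaneously that $\|x_n-x^\star\|^2$ converges almost surely (yielding (i)) and that $\sum_n\lambda_n^2\int\|A_{\lambda_n}(s,x_n)\|^2d\mu(s)<\infty$ almost surely, with finite expectation of the sum (yielding (ii)).

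For statement (i) as phrased, I must upgrade ``for each fixed $x^\star$, a.s.\ convergence'' to ``a.s., for all $x^\star\in\cZ_A(2)$ simultaneously''; the clean way is to fix a countable dense subset of $\cZ_A(2)$, obtain the almost-sure event $B$ for those points, and then extend to arbitrary $x^\star$ by the Lipschitz continuity of $y\mapsto\|x_n(\omega)-y\|$ uniformly in $n$ (the iterates are bounded on $B$), so convergence transfers to limits. For (iii), the plan is to take full expectation in the recursion, which removes the randomness and gives $\bE\|x_{n+1}-x^\star\|^2\leq \bE\|x_n-x^\star\|^2 + C\lambda_n^2$; summing the $\ell^1$ bound $\sum\lambda_n^2<\infty$ shows $\sup_n\bE\|x_n-x^\star\|^2<\infty$, which is the $p=1$ case. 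For general $p$ with $\cZ_A(2p)\neq\emptyset$, I expect the main obstacle to lie: one needs an analogous recursion controlling $\bE\|x_n-x^\star\|^{2p}$, which requires raising the one-step inequality to the $p$-th power and bounding the cross terms. The natural route is induction on $p$, using the $2p$-integrable selection to control the higher moments of the drift and the already-established lower-order moment bounds, together with $\sum\lambda_n^2<\infty$ to absorb the remainder; the delicate bookkeeping of the mixed terms $\lambda_n^k$ for $k\geq 2$ is where care is needed, but each such term is summable since only $\lambda_n^2$ and higher powers appear.
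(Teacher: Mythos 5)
Your plan coincides with the paper's proof in all essentials: the same expansion of $\|x_{n+1}-x^\star\|^2$, Lemma~\ref{lem:ps} with the centered selection $\phi\in S_A^2(x^\star)$ to produce the recursion $\bE_n\|x_{n+1}-x^\star\|^2\leq\|x_n-x^\star\|^2-\lambda_n^2(1-2\beta)\int\|A_{\lambda_n}(s,x_n)\|^2d\mu(s)+C\lambda_n^2$, Robbins--Siegmund for (i) and (ii), the countable dense subset plus a triangle-inequality transfer for the uniformity in $x^\star$ (your appeal to boundedness of the iterates is unnecessary here, since $y\mapsto\|x_n(\omega)-y\|$ is $1$-Lipschitz uniformly in $n$, but it is harmless), and induction with a multinomial expansion for (iii).

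There is, however, one concrete inaccuracy in your treatment of (iii). When you raise the one-step inequality to the $q$-th power, the mixed terms are of the form $\lambda_n^{2k_2+k_3}\,\bE\bigl[\|x_n-u\|^{2k_1}\|\phi(\xi_{n+1})\|^{2k_2}\ps{\phi(\xi_{n+1}),x_n-u}^{k_3}\bigr]$ with $k_1+k_2+k_3=q$, and your closing claim that ``each such term is summable since only $\lambda_n^2$ and higher powers appear'' is false for the term $k_2=0$, $k_3=1$: it carries a single factor $\lambda_n$, and $\sum_n\lambda_n=\infty$ by Assumption~\ref{hyp:step-iid}(i), so summability cannot absorb it. That term must instead be \emph{annihilated}, not bounded: conditioning on $\cF_n$ and using independence gives $\bE_n\ps{\phi(\xi_{n+1}),x_n-u}=\ps{\int\phi\,d\mu,\,x_n-u}=0$, which is exactly how the paper disposes of it (the case $T_n^{\vec k}=0$ when $k_3=1$, $k_2=0$). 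Once this single first-order term vanishes by centering, all remaining off-diagonal terms do satisfy $2k_2+k_3\geq 2$ and can be bounded by $c\lambda_n^2$ using the inductive moment bound $\sup_n\bE\|x_n-u\|^{2q-2}<\infty$, Cauchy--Schwarz, and $\int\|\phi\|^{2p}d\mu<\infty$ from $u\in\cZ_A(2p)$, so the rest of your induction goes through as planned.
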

\begin{proof}
Consider $u\in \cZ_A(2)$, $\phi\in S_A^2(u)$ such that $\int\phi d\mu=0$. 
Choose $0<\beta\leq\frac 12$. Note that $x_{n+1} = x_n-\lambda_n A_{\lambda_n}(\xi_{n+1},x_n)$. We expand
\begin{align*}
  \|x_{n+1}-u\|^2 &= \|x_n-u\|^2 +2\lambda_n  \ps{x_{n+1}-x_n,x_n-u}+ \lambda_n^2\|x_{n+1}-x_n\|^2 \\
&=\|x_n-u\|^2 -2\lambda_n  \ps{A_{\lambda_n}(\xi_{n+1},x_n),x_n-u}+ \lambda_n^2\|A_{\lambda_n}(\xi_{n+1},x_n)\|^2 \,.
\end{align*}
Using Lemma~\ref{lem:ps}, for all $s$ $\mu$-a.e.,
$$
\ps{A_{\lambda_n}(s,x_n),x_n-u}\geq \lambda_n (1-\beta)  \|A_{\lambda_n}(s,x)\|^2-\frac{\lambda_n}{4\beta}\|\phi(s)\|^2+ \ps{\phi(s),x_n-u}\,.
$$
Therefore,
\begin{multline}
\label{eq:dev-mse} \|x_{n+1}-u\|^2 \leq \|x_n-u\|^2 -\lambda_n^2(1-2\beta)  \|A_{\lambda_n}(\xi_{n+1},x)\|^2 \\
+\frac{\lambda_n^2}{2\beta}\|\phi(\xi_{n+1})\|^2 -2\lambda_n\ps{\phi(\xi_{n+1}),x_n-u}\,.
\end{multline}
Take the conditional expectation of both sides of the inequality:
$$
\bE_n \|x_{n+1}-u\|^2 \leq \|x_n-u\|^2 -\lambda_n^2(1-2\beta)  \int \|A_{\lambda_n}(s,x)\|^2d\mu(s)+\frac{\lambda_n^2c}{2\beta}
$$
where we set $c=\int\|\phi\|^2d\mu$ and used $\int \phi d\mu=0$.
By the Robbins-Siegmund theorem (see \cite[Theorem 1]{robbins1971convergence}) and choosing $0< \beta<\frac 12$, we deduce that:
$$\sum \lambda_n^2 \int \|A_{\lambda_n}(\,.\,,x_n)\|^2d\mu<\infty$$ (thus, point (ii) is proved), 
$\sup_n\bE(\|x_n\|^2)<\infty$ and finally, the 
sequence $(\|x_n-u\|^2)$ converges almost surely as $n\to\infty$. Let $Q$ be a dense countable subset of $\cZ_A(2)$.
There exists $B\in \cF$ such that $\bP(B)=1$ and for all $\omega\in B$, all $u\in Q$,  $(\|x_n(\omega)-u\|)$ converges.
Consider $\omega\in B$ and $x^\star\in \cZ_A(2)$. For any $\epsilon>0$, choose $u\in Q$ such that $\|x^\star-u\|\leq \epsilon$
and define $\ell_u=\lim_{n\to\infty}\|x_n(\omega)-u\|$.
Note that $\|x_n(\omega)-u\|\leq \|x_n(\omega)-x^\star\|+\epsilon$ thus $\ell_u\leq \liminf\|x_n(\omega)-x^\star\|+\epsilon$.
Similarly, $\|x_n(\omega)-x^\star\|\leq \|x_n(\omega)-u\|+\epsilon$ thus $\limsup\|x_n(\omega)-x^\star\|\leq \ell_u+\epsilon$.
Finally, $\limsup\|x_n(\omega)-x^\star\|\leq \liminf\|x_n(\omega)-x^\star\|+2\epsilon$.
As $\epsilon$ is arbitrary, we conclude that $(\|x_n(\omega)-x^\star\|)$ converges. Point (i) is proved.

We prove point (iii) by induction. Set $u\in \cZ_A(2p)$. We have shown above that $\sup_n\bE(\|x_n-u\|^2)<\infty$.
Consider an integer $q\leq p$ such that $\sup_n\bE(\|x_n-u\|^{2q-2})<\infty$. 
We will show that $\sup_n\bE(\|x_n-u\|^{2q})<\infty$ and the proof will be complete.
Use Equation~(\ref{eq:dev-mse}) with $\beta=\frac 12$,
\begin{align}
  \bE[\|x_{n+1}-u\|^{2q}] &\leq \bE[ (\|x_n-u\|^2 +\lambda_n^2\|\phi(\xi_{n+1})\|^2 -2\lambda_n\ps{\phi(\xi_{n+1}),x_n-u})^q] \nonumber \\
  & =\sum_{k_1+k_2+k_3=q}{q \choose k_1,k_2,k_3} T_n^{(k_1,k_2,k_3)}
\label{eq:multinomial}
\end{align}
where for any $\vec k=(k_1,k_2,k_3)$ such that $k_1+k_2+k_3=q$, we define
$$
 T_n^{\vec k} =(-2)^{k_3}\lambda_n^{2k_2+k_3}\bE[\|x_n-u\|^{2k_1}\|\phi(\xi_{n+1})\|^{2k_2} \ps{\phi(\xi_{n+1}),x_n-u}^{k_3}] \,.
$$
Note that $T_n^{(q,0,0)}=\bE[\|x_n-u\|^{2q}]$. We now prove that there exists a constant $c''$ such that for any 
$\vec k\neq (q,0,0)$, $|T_n^{\vec k}|\leq c''\lambda_n^2$. Consider a fixed value of $\vec k\neq (q,0,0)$ such that $k_1+k_2+k_3=q$
and consider the following cases. \\
$\bullet$ {If $k_3 = 0$}, then $k_1\leq q-1$ and $k_2\geq 1$. In that case, 
  \begin{align*}
    |T_n^{\vec k}| &\leq \lambda_n^{2k_2}\bE(\|x_n-u\|^{2k_1}){\textstyle \int\|\phi\|^{2k_2}d\mu} \\
 &\leq \alpha \lambda_n^{2}\bE(1+\|x_n-u\|^{2q-2}){\textstyle \int\|\phi\|^{2p}d\mu} 
  \end{align*}
where $\alpha$ is a constant chosen in such a way that $\lambda_n^{2k_2}\leq \alpha \lambda_n^2$ for any $1\leq k_2\leq q$
and where we used the inequality $a^{k_1}\leq 1+a^{q-1}$ for any $k_1\leq q-1$.
The constant $c'=\alpha \sup_n\bE(1+\|x_n-u\|^{2q-2}){\textstyle \int\|\phi\|^{2p}d\mu}$ is finite and we have
$|T_n^{\vec k}|\leq c'\lambda_n^2$.\\
$\bullet$ {If $k_3=1$ and $k_2=0$}, then $T_n^{\vec k}=0$ using that $\int\phi d\mu=0$.\\
$\bullet$ {In all remaining cases},  $k_1\leq q-2$ and $k_2+k_3\geq 2$. By the Cauchy-Schwarz inequality,
  \begin{align*}
    |T_n^{\vec k}| &\leq 2^{k_3}\lambda_n^{2k_2+k_3}\bE[\|x_n-u\|^{2k_1+k_3}\|\phi(\xi_{n+1})\|^{2k_2+k_3} ]\\
&= 2^{k_3}\lambda_n^{2k_2+k_3}\bE[\|x_n-u\|^{2k_1+k_3} ]{\textstyle \int \|\phi\|^{2k_2+k_3}d\mu}\,.
  \end{align*}
Now $2k_2+k_3= k_2+q-k_1\leq 2p$ and $2k_1+k_3= k_1 + q-k_2\leq k_1+p\leq  2q-2$. Using again that 
$\sup_n\bE(1+\|x_n-u\|^{2q-2})<\infty$ and $\int\|\phi\|^{2p}d\mu<\infty$, we conclude that there
exists an other constant $c''\geq c'$ such that $|T_n^{\vec k}|\leq c''\lambda_n^2$.

We have shown that $|T_n^{(k_1,k_2,k_3)}|\leq c''\lambda_n^2$ whenever $k_1+k_2+k_3=q$ and $(k_1,k_2,k_3)\neq (q,0,0)$.
Bounding the rhs of~(\ref{eq:multinomial}), we obtain
$$
\bE[\|x_{n+1}-u\|^{2q}] \leq \bE[\|x_{n}-u\|^{2q}] + c'' \lambda_n^2
$$
which in turn implies that $\sup_n\bE[\|x_{n}-u\|^{2q}]<\infty$.
\end{proof}

\subsection{Weak cluster points}
\label{sec:stability-2}
For an arbitrary sequence $(a_n : n\in \bN)$, we use the notation $\overline a_n$ to represent the weighted averaged sequence
$ \overline a_n = \sum_{k=1}^n\lambda_ka_k/\sum_{k=1}^n\lambda_k\,.$ 

Recall that a family $(f_i:i\in I)$ of measurable functions on $E\to\bR_+$  is uniformly integrable if
$$
\lim_{a\to+\infty}\sup_i\int_{\{f_i>a\}}f_i\,d\mu = 0\,.
$$
\begin{definition}
  We say that a sequence $(u_n)\in\cH^{\bN^*}$ has the property $\overline{UI}$ if
the sequence 
$$
\qquad\frac{\sum_{k=1}^n\lambda_k\|A_{\lambda_k}(\,.\,,u_k)\|}{\sum_{k=1}^n\lambda_k}\qquad (n\in \bN^*)
$$
is uniformly integrable.
\end{definition}
\begin{assum}
\label{hyp:Abar}
The monotone operator $\underline A$ is maximal.
\end{assum}

\pb{Note that Assumption~\ref{hyp:Abar} is satisfied in Examples 1, 2 and 3 above.}
\begin{theo}
 Let Assumptions~\ref{hyp:A}--\ref{hyp:Abar} hold true and suppose that $\cZ_A(2)\neq\emptyset$.
Consider the random sequence $(x_n)$ given by~(\ref{eq:algo}) with weighted averaged sequence $(\overline x_n)$. 
Let $G\in \cF$ be an event such that for almost every $\omega\in G$,  $(x_n(\omega))$ has the property $\overline {UI}$.
Then, there exists $B\in \cF$ such that $\bP(B)=1$ and such that for every $\omega\in B\cap G$, all weak cluster
 points of the sequence $(\overline x_n(\omega))$ belong to $\zer(\underline A)$.
\label{th:CVcond}
\end{theo}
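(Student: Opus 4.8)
The plan is to establish $0\in\underline A(x^\infty)$ for every weak cluster point $x^\infty$ of $(\overline x_n(\omega))$ by testing it against the maximality of $\underline A$. Since $\underline A$ is maximal monotone (Assumption \ref{hyp:Abar}), one has $0\in\underline A(x^\infty)$ if and only if $\langle v,u-x^\infty\rangle\geq 0$ for every $(u,v)\in\graph(\underline A)$. So I would fix such a pair, write $v=\int\phi\,d\mu$ with $\phi\in S_A^1(u)$ and $\phi(s)\in A(s,u)$ $\mu$-a.e., fix a realization $\omega$ in the full-probability event $B$ of Proposition \ref{prop:stability}(i) intersected with $G$, extract a subsequence with $\overline x_{n_j}\rightharpoonup x^\infty$, and aim at $\langle v,u-x^\infty\rangle\geq 0$.

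The main estimate comes from monotonicity and telescoping. For the realized sample $s=\xi_{k+1}$ one has $A_{\lambda_k}(\xi_{k+1},x_k)\in A(\xi_{k+1},x_{k+1})$ and $\phi(\xi_{k+1})\in A(\xi_{k+1},u)$; monotonicity of $A(\xi_{k+1},\cdot)$ together with the resolvent identity $\lambda_k A_{\lambda_k}(\xi_{k+1},x_k)=x_k-x_{k+1}$ gives
\begin{equation*}
\lambda_k\langle\phi(\xi_{k+1}),x_{k+1}-u\rangle\leq\tfrac12\big(\|x_k-u\|^2-\|x_{k+1}-u\|^2\big),
\end{equation*}
so that, summing, $\sum_{k\leq n}\lambda_k\langle\phi(\xi_{k+1}),x_{k+1}-u\rangle\leq\tfrac12\|x_1-u\|^2$. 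I would then rewrite the left-hand side in terms of $\langle v,\overline x_n-u\rangle$ by inserting $\phi(\xi_{k+1})-v$ (a martingale increment, since $\bE_n\phi(\xi_{n+1})=v$ and $x_n$ is $\cF_n$-measurable) and by shifting $x_{k+1}$ to $x_k$ through $x_{k+1}=x_k-\lambda_k A_{\lambda_k}(\xi_{k+1},x_k)$. After dividing by $\sum_{k\leq n}\lambda_k\to\infty$ this yields
\begin{equation*}
\langle v,\overline x_n-u\rangle\leq \frac{\|x_1-u\|^2}{2\sum_{k\leq n}\lambda_k}+\frac{M_n}{\sum_{k\leq n}\lambda_k}+\frac{R_n}{\sum_{k\leq n}\lambda_k},
\end{equation*}
where $M_n=\sum_{k\leq n}\lambda_k\langle\phi(\xi_{k+1})-v,x_k-u\rangle$ is a martingale and $R_n$ collects the second-order terms $\sum_{k\leq n}\lambda_k^2\langle\phi(\xi_{k+1}),A_{\lambda_k}(\xi_{k+1},x_k)\rangle$.

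The first term vanishes. For the martingale term I would apply Kronecker's lemma after showing that $\sum_k(\sum_{j\leq k}\lambda_j)^{-1}\lambda_k\langle\phi(\xi_{k+1})-v,x_k-u\rangle$ converges almost surely, using the boundedness of $(x_n)$ from Proposition \ref{prop:stability}(i),(iii) and the summability $\sum_k\lambda_k^2\int\|A_{\lambda_k}(s,x_k)\|^2\,d\mu<\infty$ of Proposition \ref{prop:stability}(ii); the residual $R_n$ is controlled by the same summability together with $(\lambda_n)\in\ell^2$. Passing to the limit along $n_j$ and using $\overline x_{n_j}\rightharpoonup x^\infty$ gives $\langle v,x^\infty-u\rangle\leq 0$, hence the claim by maximality; a separability argument lets me choose the exceptional null set uniformly over a dense family of pairs $(u,v)$ and then over all of $\graph(\underline A)$.

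The hard part is exactly the control of the second-order and martingale remainders when the selection $\phi$ is only integrable. The naive bounds on $R_n$ and on the martingale bracket produce the quantity $\|\phi(s)\|\,\big(\tfrac{1}{\sum_{k\leq n}\lambda_k}\sum_{k\leq n}\lambda_k^2\|A_{\lambda_k}(s,x_k)\|\big)$, a product of two merely integrable factors that need not be integrable, so the square-integrable case ($\phi\in S_A^2(u)$, where Cauchy--Schwarz and Proposition \ref{prop:stability}(ii) suffice) does not extend for free. This is precisely where the hypothesis defining $G$ enters: property $\overline{UI}$ makes the averaged family $\big(\tfrac{1}{\sum_{k\leq n}\lambda_k}\sum_{k\leq n}\lambda_k\|A_{\lambda_k}(\,\cdot\,,x_k)\|\big)_n$ uniformly integrable, so by the Dunford--Pettis theorem the vector-valued averages $\tfrac{1}{\sum_{k\leq n}\lambda_k}\sum_{k\leq n}\lambda_k A_{\lambda_k}(\,\cdot\,,x_k)$ are relatively weakly compact in $L^1(E,\cH,\mu)$; a Vitali / uniform-absolute-continuity argument then forces the remainder to have nonpositive limit superior, closing the limit passage for arbitrary integrable selections. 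I expect this uniform-integrability step, rather than any of the telescoping or martingale computations, to be the crux of the proof.
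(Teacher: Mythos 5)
Your skeleton --- testing $0\in\underline A(\tilde x)$ against maximality, a Fej\'er/telescoping estimate, a martingale-plus-remainder decomposition --- matches the paper's high-level strategy, and your telescoping inequality is valid; indeed, for pairs $(u,v)$ admitting a \emph{square-integrable} selection $\phi\in S_A^2(u)$ your route closes, by Cauchy--Schwarz and Proposition~\ref{prop:stability}(ii). The genuine gap sits exactly at the point you flag but do not resolve: when $\phi$ is only integrable, both of your remainders carry the randomness through the \emph{sampled} values $\phi(\xi_{k+1})$. The increments of $M_n$ then have infinite conditional variance (the bracket involves $\int\|\phi-v\|^2d\mu=+\infty$), so the $L^2$-martingale/Kronecker argument you invoke fails outright; and the natural bound on $R_n$ is a weighted sum of products $\|\phi(\xi_{k+1})\|\,\|A_{\lambda_k}(\xi_{k+1},x_k)\|$ of two merely integrable sampled factors. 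Your proposed fix does not repair this: the property $\overline{UI}$, Dunford--Pettis and Vitali all concern the functions $s\mapsto \sum_{k\leq n}\lambda_k\|A_{\lambda_k}(s,x_k)\|/\sum_{k\leq n}\lambda_k$ integrated against $\mu$, and say nothing about trajectory averages of $\|\phi(\xi_{k+1})\|$; moreover, weak relative compactness in $L^1(E,\cH,\mu)$ only licenses pairing against \emph{bounded} test functions, whereas your pairing function $\phi$ is unbounded. Completing your route would essentially require a weighted strong law of the form $\sum_{k\leq n}\lambda_k\|\phi(\xi_{k+1})\|\chi_{\{\|\phi\|>K\}}(\xi_{k+1})/\sum_{k\leq n}\lambda_k\to\int_{\{\|\phi\|>K\}}\|\phi\|\,d\mu$ a.s.\ for an arbitrary positive $(\lambda_n)\in\ell^2\setminus\ell^1$, which is not available at this level of generality.

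The paper's proof avoids sampled appearances of $\phi$ altogether, and this is the missing idea. It writes $x_{n+1}=x_n-\lambda_n h_{\lambda_n}(x_n)+\lambda_n\eta_{n+1}$ with $h_\lambda(x)=\int A_\lambda(s,x)\,d\mu(s)$, so that the only martingale, $\eta_{n+1}=h_{\lambda_n}(x_n)-A_{\lambda_n}(\xi_{n+1},x_n)$, is $\phi$-free and is controlled once and for all by Proposition~\ref{prop:stability}(ii). After fixing $\omega$ in the resulting probability-one event, the whole $(u,v)$-dependent argument is deterministic: an auxiliary sequence $y_{n+1}=y_n-\lambda_n h_{\lambda_n}(x_n)$ (the trick from~\cite{andrieu2005stability}) stays $\epsilon$-close to $x_n$; Lemma~\ref{lem:ps} with $\beta=1$ is applied only on the truncation set $\{\|\phi\|\leq K\}$, producing a harmless $-\lambda_n K^2/4$ term; and the tail contribution $\int_{\{\|\phi\|>K\}}\|A_{\lambda_n}(s,x_n)\|\,d\mu(s)$ is precisely where $\overline{UI}$ enters, through the uniform absolute continuity of the averaged family (\cite[Proposition II-5-2]{neveu1964bases}) --- the Vitali-type mechanism you anticipated, but acting on the $\mu$-integrated averages rather than on the sampled sums. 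A further benefit of this separation: since $\eta$ does not depend on $(u,v)$, the exceptional null set is uniform over $\graph(\underline A)$, so the dense-subfamily argument you sketch at the end becomes unnecessary.
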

\begin{proof}
Denote $h_\lambda(x) = \int A_\lambda(s,x)d\mu(s)$ for any $\lambda>0$, $x\in \cH$.
We justify the fact that $h_\lambda(x)$ is well defined.
As $\underline A$ is maximal, its domain contains at least one point $u\in \cH$.
For such a point $u$, there exists $\phi\in S_A^1(u)$.
As $A_{\lambda}(s,\,.\,)$ is $\frac 1\lambda$-Lipschitz continuous, 
$\|A_{\lambda}(s,x)\|\leq \|A_{\lambda}(s,u)\| + \frac{1}{\lambda}\|x-u\|$.
Moreover $\|A_{\lambda}(s,u)\|\leq \|A_0(s,u)\|\leq \|\phi(s)\|$ and since
$\phi\in L^1(E,\cH,\mu)$, we obtain that  $A_{\lambda}(\,.\,,x)\in L^1(E,\cH,\mu)$.
This implies that $h_\lambda(x)$ is well defined for all $x\in \cH$, $\lambda>0$.
We write 
$$
x_{n+1} = x_n -\lambda_n h_{\lambda_n}(x_n) + \lambda_n\eta_{n+1}
$$
where $\eta_{n+1} = -A_{\lambda_n}(\xi_{n+1},x_n)+h_{\lambda_n}(x_n)$ is a $\cF_n$-adapted 
martingale increment sequence \emph{i.e.}, $\bE_n(\eta_{n+1})=0$.
Note that
$$
\bE_n \|\eta_{n+1}\|^2 \leq \int \|A_{\lambda_n}(s,x_n)\|^2 d\mu(s)
$$
and by Proposition~\ref{prop:stability}(ii), it holds that $\sum
\lambda_n^2 \bE_n \|\eta_{n+1}\|^2<\infty$ almost surely.  As a
consequence, the $\cF_n$-adapted martingale $\sum_{k\leq n}\lambda_k
\eta_{k+1}$ converges almost surely to a random variable which is
finite $\bP$-a.e. Along with Proposition~\ref{prop:stability}, 
this implies that there exists
an event $B\in \cE$ of probability one such that for any $\omega\in B\cap G$,
\begin{enumerate}[(i)]
\item  $(\sum_{k\leq n}\lambda_k \eta_{k+1}(\omega))$ converges,
\item  $(x_n(\omega))$ is bounded,
\item $\sum_n\lambda_n^2\int \|A_{\lambda_n}(\,.\,,x_n(\omega))\|^2d\mu$ is finite,
\item $(x_n(\omega))$ has the property $\overline{UI}$.
\end{enumerate}
From now on to the end of this proof, we fix such an $\omega$.
As it is fixed, we omit the dependency w.r.t. $\omega$ to keep notations
simple. We write for instance $x_n$ instead of $x_n(\omega)$
and what we refer to as constants can depend on $\omega$.

Let $(u,v)\in \graph(\underline A)$ and consider $\phi\in S_A^1(u)$ such that $v=\int\phi d\mu$. 
Denote by $\epsilon>0$ an arbitrary positive constant. 

We need some preliminaries. By (i), there exists an integer $N=N(\epsilon)$ such that  for all $n\geq N$, $\left\|\sum_{k=N}^n
 \lambda_{k} \eta_{k+1}\right\|\leq\epsilon$. Define $Y_n(s) = \|A_{\lambda_n}(s,x_n)\|$
and let $(\overline Y_n)$ represent the corresponding weighted averaged sequence.
As $(\overline Y_n)$ is uniformly integrable, the same holds for the sequence
$(\overline Y_n^{(N)})$ defined by
$$
\overline Y_n^{(N)} = \frac{\sum_{k=N}^n \lambda_k Y_k}{\sum_{k=N}^n \lambda_k}\,.
$$
In particular, there exists a constant $c$ such that
\begin{equation}
\sup_n \int \overline Y_n^{(N)} d\mu < c\,.\label{eq:int-YbarN}
\end{equation}
Moreover, by \cite[Proposition II-5-2]{neveu1964bases}, there exists $\kappa_\epsilon>0$ such that 
$$ 
\forall H\in \cE,\ \ \mu(H)<\kappa_\epsilon\ \Rightarrow\ \int_H \overline Y_n^{(N)}d\mu <\epsilon\,.
$$
Since $\mu(\{\|\phi\|>K\})\to 0$ as $K\to+\infty$, there exists $K_1$ (depending on $\epsilon$) such that for all
$K\geq K_1$, $\mu(\{\|\phi\|>K\})<\kappa_\epsilon$. For any such $K$, 
\begin{equation}
\int_{\{\|\phi\|>K\}} \overline Y_n^{(N)}d\mu <\epsilon\,.\label{eq:Keps}
\end{equation}
Denote $v_K=\int_{\{\|\phi\|>K\}} \phi d\mu$. Note that $v_K\to v$ by the dominated convergence theorem.
Thus, there exists $K_2$ such that for all $K\geq K_2$,  $\|v_K-v\|<\epsilon$. 
From now on, we set $K\geq \max(K_1,K_2)$. 

Using an idea from~\cite{andrieu2005stability}, we define a sequence
$(y_n:n\geq N)$ such that $y_N=x_N$ and $y_{n+1} = y_n -
\lambda_{n}h_{\lambda_{n}}(x_n)$ for all $n\geq N$.  By induction, $y_n
= x_n - \sum_{k=N}^{n-1}\lambda_{k} \eta_{k+1}$. In particular,
$\|y_n-x_n\|\leq \epsilon$. We expand
  \begin{align*}
    \|y_{n+1}-u\|^2 &= \|y_n-u\|^2 - 2 \lambda_{n}\ps{h_{\lambda_{n}}(x_n),y_n-u}+ \|y_{n+1}-y_n\|^2 \\
    &\leq \|y_n-u\|^2 - 2 \lambda_{n}\ps{h_{\lambda_{n}}(x_n),x_n-u}  + 2 \epsilon \lambda_{n}\|h_{\lambda_{n}}(x_n)\| 
    + \lambda_{n}^2\|h_{\lambda_{n}}(x_n)\|^2\,.
  \end{align*}
Define $\delta_{K,\lambda}(x) = \int_{\{\|\phi\|> K\}}A_{\lambda}(s,x)d\mu(s)$ and use Lemma \ref{lem:ps} with $\beta=1$:
\begin{align*}
  \ps{h_{\lambda_n}(x_n)-v_K,x_n-u} &\geq - \|\delta_{K,\lambda_n}(x_n)\| 
  \|x_n-u\| -\frac{\lambda_n K^2}{4} \\
&\geq - c \int_{\{\|\phi\|> K\}}Y_nd\mu   -\frac{\lambda_n K^2}{4} 
\end{align*}
where the constant $c$ is selected in such a way that $c>\sup_n \|x_n-u\|$. 
Using that  $\|v_K-v\|<\epsilon$,
$$
\ps{h_{\lambda_n}(x_n)-v,x_n-u}  \geq - c\epsilon   - c \int_{\{\|\phi\|> K\}}Y_nd\mu   -\frac{\lambda_n K^2}{4} \,.
$$ 
As a consequence,
  \begin{equation}
    \|y_{n+1}-u\|^2   \leq \|y_n-u\|^2 - 2 \lambda_{n}\ps{v,x_n-u}  +r_n
\label{eq:y}
  \end{equation}
where we define
\begin{align*}
r_n &= 2c\epsilon \lambda_n + \lambda_n^2 s_{n}
+ 2 \lambda_{n} c t_{n,K}
+ 2 \epsilon \lambda_{n}t_{n,0}\\
  s_{n} &= \|h_{\lambda_{n}}(x_n)\|^2 + K^2/2\\
  t_{n,a} &= \int_{\{\|\phi\|\geq a\}}Y_nd\mu \ \ \ \ (\forall a\in\{0,K\}).
\end{align*}
For any $a\in\{0,K\}$, denote
$$
\overline t_{n,a}^{(N)} = \frac{\sum_{k=N}^n \lambda_k t_{k,a}}{\sum_{k=N}^n \lambda_k}\,.
$$
By inequality (\ref{eq:int-YbarN}), $\overline t_{n,0}^{(N)}< c$.
By inequality~(\ref{eq:Keps}), $\overline t_{n,0}^{(N)}< \epsilon$.
By point (iii), $\sum_n\lambda_n^2\|h_{\lambda_n}(x_n)\|^2<\infty$.
Using Assumption~\ref{hyp:step-iid}(i), it follows that 
$$
\frac{\sum_{k=N}^n r_k}{\sum_{k=N}^n \lambda_k} < 6c\epsilon + o_n(1)
$$
where $o_n(1)$ stands for a sequence which converges to zero as $n\to\infty$.
Summing the inequalities~(\ref{eq:y}) down to rank $N$, and dividing by $2\sum_{k=N}^n\lambda_k$, we obtain
$$
0   \leq -  \frac{\sum_{k=N}^n \lambda_k\ps{v,x_k-u}}{\sum_{k=N}^n \lambda_k} +3 c \epsilon +o_n(1)\,.
$$
Let $\tilde x$ be a weak cluster point of the weighted averaged sequence $\overline x_n$.
Then, $\tilde x$ is also a weak cluster point of the sequence
$$
\frac{\sum_{k=N}^n \lambda_k x_k}{\sum_{k=N}^n \lambda_k} \,. 
$$
We obtain $0   \leq -  \ps{v,\tilde x-u} +3 c \epsilon$. The inequality holds for any $\epsilon>0$, thus
 $0\leq  -\ps{v,\tilde x-u}$. As the inequality holds for any 
$(u,v)\in \mathrm{gr}(\underline A)$ and $\underline A$ is maximal monotone, this means that
$(\tilde x,0)\in \mathrm{gr}(\underline A)$~\cite[Theorem 20.21]{bauschke2011convex}. 
\end{proof}

\subsection{Weak ergodic convergence}
\label{sec:stability-3}

The aim of Theorem~\ref{th:opial} below is to merge Proposition~\ref{prop:stability} and Theorem~\ref{th:CVcond} into a 
weak ergodic convergence result. We need the following condition to hold.

\begin{assum}
\label{hyp:zer}
  $\zer(\underline A)\neq\emptyset$ and $\zer(\underline A)\subset \cZ_A(2)$.
\end{assum}

The condition $\zer(\underline A)\neq\emptyset$ means that there exists $x^\star\in \cH$ for which one can find
a selection $\phi$ of $A(\,.\,,x^\star)$ such that  $\int\phi d\mu=0$.
The condition $\zer(\underline A)\subset \cZ_A(2)$ means that moreover, such a $\phi$ can be chosen
to be square integrable. For instance, this holds under the stronger condition that for any zero $x^\star$
of $\underline A$,  $|A(\,.\,,x^\star)|$ is square integrable.

\begin{lemma}[Passty]
  Let $(\lambda_n)$ be a non-summable sequence of positive reals, and $(a_n)$ any sequence in $\cH$
with weighted averaged $(\overline a_n)$. Assume there exists a non-empty closed convex subset $Q$ of $\cH$
such that (i) weak subsequential limits of $\overline a_n$ lie in $Q$ ; and (ii) $\lim_n \|a_n -b\|$ exists for
all $b\in Q$. Then $(\overline a_n)$ converges weakly to an element of $Q$.
\label{lem:opial}
\end{lemma}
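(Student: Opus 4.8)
The plan is to run the classical Opial argument, but at the level of inner products rather than norms, using the non-summability of $(\lambda_n)$ to pass limits through the weighted averages. First I would settle boundedness and the existence of weak cluster points. Since $Q$ is non-empty, fix $b\in Q$; by (ii) the limit $\lim_n\|a_n-b\|$ exists, so $(a_n)$ is bounded, and because each $\overline a_n=\sum_{k=1}^n\lambda_k a_k/\sum_{k=1}^n\lambda_k$ is a convex combination of $a_1,\dots,a_n$, the averaged sequence $(\overline a_n)$ is bounded too. As $\cH$ is a separable Hilbert space, bounded sequences admit weakly convergent subsequences, so $(\overline a_n)$ has at least one weak cluster point, and by (i) every such cluster point lies in the closed convex set $Q$.

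The key observation is that condition (ii) controls inner products against differences of points of $Q$. For any $b,b'\in Q$, expanding gives $\|a_n-b\|^2-\|a_n-b'\|^2=2\ps{a_n,b'-b}+\|b\|^2-\|b'\|^2$, and since both $\lim_n\|a_n-b\|^2$ and $\lim_n\|a_n-b'\|^2$ exist by (ii), the scalar limit $\lim_n\ps{a_n,b'-b}$ exists. Next I would invoke a Toeplitz/Cesàro averaging fact: if a scalar sequence $(c_k)$ converges and $(\lambda_k)$ is positive and non-summable, then $\sum_{k\le n}\lambda_k c_k/\sum_{k\le n}\lambda_k$ converges to the same limit. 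Applying this with $c_k=\ps{a_k,b'-b}$ shows that $\lim_n\ps{\overline a_n,b'-b}$ exists for every $b,b'\in Q$.

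Finally I would force uniqueness of the weak cluster point, which together with boundedness yields weak convergence. Suppose $\tilde x$ and $\tilde y$ are two weak cluster points of $(\overline a_n)$; by (i) both lie in $Q$, so taking $b=\tilde x$ and $b'=\tilde y$ the limit $L=\lim_n\ps{\overline a_n,\tilde y-\tilde x}$ exists. Evaluating $L$ along the subsequence converging weakly to $\tilde x$ and along the subsequence converging weakly to $\tilde y$ gives $L=\ps{\tilde x,\tilde y-\tilde x}=\ps{\tilde y,\tilde y-\tilde x}$, and subtracting yields $\|\tilde y-\tilde x\|^2=0$, i.e. $\tilde x=\tilde y$. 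A bounded sequence in a Hilbert space with a unique weak cluster point converges weakly to it, so $(\overline a_n)$ converges weakly to a point of $Q$.

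The step requiring the most care is the averaging lemma and the verification that non-summability is exactly what makes it work: one splits the weighted average at an index beyond which $|c_k-c|<\epsilon$, bounds the finite head by a constant divided by $\sum_{k\le n}\lambda_k$, which tends to $0$ precisely because $\sum_k\lambda_k=+\infty$, and controls the tail by $\epsilon$. Everything else is a routine transcription of Opial's lemma, so I expect the main obstacle to be purely this Toeplitz-type passage and not the geometric part of the argument.
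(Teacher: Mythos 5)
Your proof is correct, and it is essentially the classical argument: the paper itself gives no proof of this lemma, deferring to Passty's original article \cite{passty1979ergodic}, where the proof proceeds exactly as you do --- deducing the existence of $\lim_n\ps{a_n,b'-b}$ for $b,b'\in Q$ from the parallelogram-type expansion of $\|a_n-b\|^2-\|a_n-b'\|^2$, passing to the weighted averages via the Toeplitz lemma (which is where non-summability of $(\lambda_n)$ enters), and concluding uniqueness of the weak cluster point by evaluating the limit along two subsequences. No gaps; the only point worth noting is that weak sequential compactness of bounded sets in $\cH$ holds in any Hilbert space, so separability is not actually needed for the extraction step.
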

\begin{proof}
  See \cite{passty1979ergodic}.
\end{proof}

\begin{theo}
\label{th:opial} 
   Let Assumptions~\ref{hyp:A}--\ref{hyp:zer} hold true.
Consider the random sequence $(x_n)$ given by~(\ref{eq:algo}) with weighted averaged sequence $(\overline x_n)$. 
Let $G\in \cF$ be an event such that for almost every $\omega\in G$,  $(x_n(\omega))$ has the property $\overline {UI}$.
Then, almost surely on $G$, $(\overline x_n)$ converges \pb{weakly} to a point in  $\zer(\underline A)$.
\end{theo}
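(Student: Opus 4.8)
The plan is to combine the two main results already established—Proposition~\ref{prop:stability} on boundedness (in particular almost-sure convergence of $\|x_n-x^\star\|$) and Theorem~\ref{th:CVcond} on weak cluster points—by invoking Passty's lemma (Lemma~\ref{lem:opial}). The statement to prove is essentially a direct corollary, so the bulk of the work is bookkeeping with the various almost-sure events rather than new analysis.

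First I would fix the candidate set $Q=\zer(\underline A)$. This is non-empty by Assumption~\ref{hyp:zer}, and it is closed and convex: $\underline A$ is maximal monotone by Assumption~\ref{hyp:Abar}, and the zero set of a maximal monotone operator is always closed and convex (it equals $\underline A^{-1}(0)$, and maximal monotone operators have closed convex preimages). Next, Assumption~\ref{hyp:zer} gives the crucial inclusion $\zer(\underline A)=Q\subset \cZ_A(2)$, which lets me apply Proposition~\ref{prop:stability}(i): there is an event $B_1\in\cF$ with $\bP(B_1)=1$ such that for every $\omega\in B_1$ and every $x^\star\in \cZ_A(2)$—hence in particular for every $b\in Q$—the sequence $(\|x_n(\omega)-b\|)$ converges as $n\to\infty$. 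This verifies hypothesis (ii) of Passty's lemma on $B_1$.

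Then I would invoke Theorem~\ref{th:CVcond}, whose assumptions (\ref{hyp:A}--\ref{hyp:Abar} together with $\cZ_A(2)\neq\emptyset$, which follows from Assumption~\ref{hyp:zer}) are all in force. It yields an event $B_2\in\cF$ with $\bP(B_2)=1$ such that for every $\omega\in B_2\cap G$, all weak cluster points of $(\overline x_n(\omega))$ lie in $\zer(\underline A)=Q$. This verifies hypothesis (i) of Passty's lemma on $B_2\cap G$. Setting $B=B_1\cap B_2$, we have $\bP(B)=1$, and for almost every $\omega\in B\cap G$ both hypotheses of Lemma~\ref{lem:opial} hold (the step sequence $(\lambda_n)$ is non-summable by Assumption~\ref{hyp:step-iid}(i), since $(\lambda_n)\in \ell^2\backslash\ell^1$). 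I should also fold in the measure-zero exceptional set coming from the phrase ``for almost every $\omega\in G$, $(x_n(\omega))$ has the property $\overline{UI}$'' in the hypothesis—this is why the conclusion is stated as holding almost surely \emph{on} $G$ rather than for every $\omega\in G$.

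Applying Passty's lemma pointwise for each such $\omega$ gives that $(\overline x_n(\omega))$ converges weakly to some element of $\zer(\underline A)$, which is exactly the desired conclusion. The main obstacle, such as it is, is purely organizational: I do not expect any analytic difficulty here, only the need to intersect the three full-probability (resp.\ $G$-a.e.) events correctly and to check that each technical hypothesis of Passty's lemma is supplied by the appropriate earlier result. The genuinely hard content—establishing the convergence of $\|x_n-x^\star\|$ and the localization of cluster points under maximality and the $\overline{UI}$ property—has already been carried out in Proposition~\ref{prop:stability} and Theorem~\ref{th:CVcond}.
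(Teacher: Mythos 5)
Your proposal is correct and takes exactly the route of the paper, whose proof of this theorem is literally the one-line combination of Proposition~\ref{prop:stability}(i), Theorem~\ref{th:CVcond} and Lemma~\ref{lem:opial}; your write-up merely makes explicit the bookkeeping the paper leaves implicit (closedness and convexity of $\zer(\underline A)$ via maximal monotonicity, non-summability of $(\lambda_n)$, and the intersection of the full-probability events with $G$). All of these verifications are accurate, so nothing is missing.
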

\begin{proof}
It is a consequence of Proposition~\ref{prop:stability}(i), Theorem~\ref{th:CVcond} and Lemma~\ref{lem:opial}.
\end{proof}

Theorem~\ref{th:opial} establishes the almost sure weak ergodic convergence of the 
stochastic proximal point algorithm under the abstract condition that w.p.1, $(x_n)$ has the property $\overline{UI}$.
We must now provide verifiable conditions under this property
indeed holds w.p.1. This is the purpose of the next section.

\clearpage 
\section{Main results}
\label{sec:cv}

\subsection{Case of a common domain} We first address the case where the domains $D_s$ of the operators $A(s,\,.\,)$ ($s\in E$)
are equal (at least for all $s$ outside a neglible set). We also need an additional assumption.

\begin{assum}
\label{hyp:A0bound}
For any bounded set $K\subset\cH$, 
the family $(\|A_0(\,.\,,x)\|:x\in K\cap \cD)$ is uniformly integrable.
\end{assum}

Assumption~\ref{hyp:A0bound} is satisfied if the following stronger condition holds for any bounded set $K\subset\cH$:
\begin{equation}
\label{eq:moment}
\exists r_K>0,\ \sup_{x\in K\cap \cD} \int \|A_0(s,x)\|^{1+r_K}\ d\mu(s)<\infty\,.
\end{equation}

\begin{coro}
 Let Assumptions~\ref{hyp:A}--\ref{hyp:A0bound} hold true. 
Assume that the domains $D_s$ coincide for all $s$ outside a $\mu$-negligible set.
Consider the random sequence $(x_n)$ given by~(\ref{eq:algo}) with weighted averaged sequence $(\overline x_n)$. 
Then, almost surely, $(\overline x_n)$ converges weakly to a zero of $\underline A$. 
\label{coro:CVdom}
\end{coro}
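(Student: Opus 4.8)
The plan is to deduce this corollary directly from Theorem~\ref{th:opial} by showing that, under the stated hypotheses, the event $G$ appearing there can be taken to be of full probability; that is, almost surely the sequence $(x_n)$ has the property $\overline{UI}$. Since Assumptions~\ref{hyp:A}--\ref{hyp:zer} are all in force and $\zer(\underline A)\subset\cZ_A(2)$ with $\zer(\underline A)\neq\emptyset$ guarantees $\cZ_A(2)\neq\emptyset$, everything needed to invoke Theorem~\ref{th:opial} is available once the $\overline{UI}$ property is verified. So the whole argument reduces to checking uniform integrability of the weighted averages of $\|A_{\lambda_k}(\,.\,,x_k)\|$ with probability one.

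First I would localize the iterates in the common domain. For each $s$ outside a $\mu$-null set the resolvent satisfies $J_\lambda(s,x)\in D_s=\cD$, because $(x-J_\lambda(s,x))/\lambda\in A(s,J_\lambda(s,x))$ forces $J_\lambda(s,x)\in\dom(A(s,\,.\,))$. Since $\xi_{n+1}$ has law $\mu$, the iterate $x_{n+1}=J_{\lambda_n}(\xi_{n+1},x_n)$ lies in $\cD$ almost surely, hence $x_n\in\cD$ for all $n\geq1$ a.s. (a countable union of null sets). On this full-probability event the preliminary bound $\|A_\lambda(s,x)\|\leq\|A_0(s,x)\|$ applies at every iterate, giving for a.e. $s$ the pointwise domination
$$
\frac{\sum_{k=1}^n\lambda_k\|A_{\lambda_k}(\,.\,,x_k)\|}{\sum_{k=1}^n\lambda_k}\ \leq\ \frac{\sum_{k=1}^n\lambda_k\|A_{0}(\,.\,,x_k)\|}{\sum_{k=1}^n\lambda_k}\,.
$$
Moreover, Proposition~\ref{prop:stability}(i) ensures that, on an event of probability one, $(x_n)$ is bounded; I intersect with this event so that all the $x_k$ lie in a single bounded set $K$ (depending on $\omega$), whence all $x_k\in K\cap\cD$.

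The key step I would establish is that the right-hand side above is uniformly integrable. By Assumption~\ref{hyp:A0bound} the family $(\|A_0(\,.\,,x)\|:x\in K\cap\cD)$ is uniformly integrable, so in particular the sequence $g_k:=\|A_0(\,.\,,x_k)\|$ is uniformly integrable. I would then use the characterization valid on the finite measure space $(E,\cE,\mu)$: a nonnegative family is uniformly integrable iff it is bounded in $L^1$ and uniformly absolutely continuous (the latter in the sense of \cite[Proposition II-5-2]{neveu1964bases}, already used in the proof of Theorem~\ref{th:CVcond}). Both properties are preserved under convex combinations: writing $w_k^{(n)}=\lambda_k/\sum_{j\leq n}\lambda_j$, one has $\int\big(\sum_k w_k^{(n)}g_k\big)\,d\mu\leq\sup_k\int g_k\,d\mu$ and $\int_H\big(\sum_k w_k^{(n)}g_k\big)\,d\mu\leq\sup_k\int_H g_k\,d\mu$ for every measurable $H$, so the weighted averages $\overline g_n$ inherit uniform integrability. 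Finally, since the averages of $\|A_{\lambda_k}(\,.\,,x_k)\|$ are nonnegative and dominated by $\overline g_n$, they satisfy the same two bounds and are therefore uniformly integrable as well. This shows $(x_n)$ has the property $\overline{UI}$ almost surely, so Theorem~\ref{th:opial} applies with $G=\Omega$ and yields the weak convergence of $(\overline x_n)$ to a zero of $\underline A$.

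The main obstacle is precisely the uniform-integrability bookkeeping of the middle step: one must pass from uniform integrability of the individual terms $\|A_0(\,.\,,x_k)\|$ to uniform integrability of their Cesàro-type weighted averages, and the cleanest route is the $L^1$-boundedness plus uniform-absolute-continuity criterion, which makes stability under convex combinations and under domination transparent. The only other point requiring care is ensuring a single bounded set $K$ captures the entire random trajectory, which is exactly what the almost sure convergence of $\|x_n-x^\star\|$ from Proposition~\ref{prop:stability}(i) provides.
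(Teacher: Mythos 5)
Your proposal is correct and follows essentially the same route as the paper's proof: localize the iterates in $K\cap\cD$ via Proposition~\ref{prop:stability} and the common-domain hypothesis, dominate $\|A_{\lambda_k}(\,.\,,x_k)\|$ by $\|A_0(\,.\,,x_k)\|$, pass uniform integrability to the weighted averages, and invoke Theorem~\ref{th:opial}. The only difference is that you spell out two steps the paper leaves implicit (that $x_n\in\cD$ a.s.\ because $J_\lambda(s,x)\in D_s$, and that uniform integrability is stable under convex combinations and domination), which is fine.
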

\begin{proof}
By Proposition~\ref{prop:stability} and the fact that $D_s=\cD$ for all $s$ $\mu$-a.e.,
there is a set of probability one such that for any $\omega$ in that set, there is a bounded set
 $K=K_\omega$ such that $x_n(\omega)\in K\cap \cD$ for all $n\in \bN^*$. 
By Assumption~\ref{hyp:A0bound}, the sequence $(\|A_0(\,.\,,x_n(\omega))\|:n \in \bN^*)$ is uniformly integrable.
As  $\|A_{\lambda_n}(\,.\,,x_n(\omega))\|\leq \|A_0(\,.\,,x_n(\omega))\|$, the same holds for the sequence $(\|A_{\lambda_n}(\,.\,,x_n(\omega))\|:n\in \bN^*)$
and holds as well for the corresponding weighted averaged sequence.
The conclusion follows from Theorem~\ref{th:opial}.
\end{proof}

\subsection{Case of distinct domains}
\label{sec:distinct-domains}

\pb{We now address the case where the domains $D_s$ may vary with $s$.
The case is more involved, because the sole Assumption~\ref{hyp:A0bound} 
is not sufficient to ensure the convergence. The
  reason is that the inequality $\|A_{\lambda_n}(s,x_n)\|\leq
  \|A_0(s,x_n)\|$ used to prove Corollary~\ref{coro:CVdom} does no
  longer hold when $x_n\notin D_s$.  Nonetheless, using that
  $A_{\lambda}(s,\,.\,)$ is $\frac 1\lambda$-Lipschitz continuous, the
  argument can be adapted provided that the iterates converge ``quickly enough''
  to the essential domain~$\cD$. The crux of the paragraph is therefore
  to provide estimates of the distance between $x_n$ and the set $\cD$.
  To this end, we shall need some regularity conditions on the collection
  of sets $D_s$. These conditions can be seen as an extension to possibly infinitely many sets 
  of the \emph{bounded linear regularity} condition of Bauschke \emph{et al.} \cite{bauschke1999strong}.
}

We define the mapping $\Pi:E\times \cH\to\cH$  by 
$$
\Pi(s,x)=\mathrm{proj}_{\mathrm{cl}(D_s)}(x)\,.
$$
Note that $\Pi(s,x)=\lim_{\lambda\downarrow 0} J_\lambda(s,x)$ by \cite[Theorem 23.47]{bauschke2011convex}.
By Assumption~\ref{hyp:A}, $\Pi$ is $\cE\otimes\cB(\cH)/\cB(\cH)$-measurable as a pointwise limit of measurable maps.
The distance between a point $x\in \cH$ and $D_s$ coincides with $d(x,D_s) = \|x-\Pi(s,x)\|$.

\pb{
\begin{assum}
  \label{hyp:reg-domaine}
For every $M>0$, there exists $\kappa_M >0$ such that for all $x\in \cH$ such that $\|x\|\leq M$, 
$$\int d(x,D_s)^2d\mu(s)\geq \kappa_M \,d(x,\cD)^2\,.$$
\end{assum}
The above assumption is quite mild, and is easier to illustrate in the case of finitely many sets.
Following \cite{bauschke1999strong}, we say that a finite collection of closed convex subsets $(X_1,\dots,X_m)$ 
over some Euclidean space is \emph{boundedly linearly regular} if for every $M>0$, there exists $\kappa'_M>0$
such that for every $\|x\|\leq M$,
\begin{equation}
\label{eq:linearly-reg}
\max_{i=1\dots m} d(x,X_i)\geq \kappa'_M d(x,X)\ \text{ where }X=\bigcap_{i=1}^mX_i
\end{equation}
where implicitely $X\neq \emptyset$. Sufficient conditions for a collection of set can be found
in \cite{bauschke1999strong} and reference therein. For instance, the qualification condition
$\cap_i\mathrm{ri}(X_i)\neq\emptyset$ is sufficient to ensure that $X_1,\dots,X_m$ are boundedly linearly regular,
where $\mathrm{ri}$ stands for the relative interior.

Now consider the special case of Example~\ref{ex:fini} \emph{i.e.}, $\mu$ is finitely
supported. Assume that $\cH$ is a Euclidean space and that  the domains
$D_1,\dots,D_m$ of the operators $A(1,\,.\,),\dots,A(m,\,.\,)$ are closed.
It is routine to check that Assumption~\ref{hyp:reg-domaine} holds if and only if 
$D_1,\dots,D_m$ are boundedly linearly regular. 
}
\begin{lemma}
\label{lem:proj-perturb}
  Let Assumptions~\ref{hyp:A},~\ref{hyp:step-iid} and \ref{hyp:reg-domaine} hold true.
Assume that $\lambda_n/\lambda_{n+1}\to 1$  as $n\to +\infty$ and $\cD\neq \emptyset$.
For each $n$, consider a $\cF_n$-measurable random variable $\delta_n$ on $\cH$. Assume that the sequence $(\bE_n\|\delta_{n+1}\|^2)$
is bounded almost surely and in $L^1(\Omega,\cH,\bP)$. 
\pb{Consider the sequence $(x_n)$ given by
\begin{equation}
x_{n+1} = \Pi(\xi_{n+1},x_n) + \lambda_n \delta_{n+1}\,.\label{eq:proj-perturb}
\end{equation}
Assume that, with probability one, $(x_n)$ is bounded. Then 
\begin{align*} 
  & \sup_n \  \frac{\sum_{k\leq n} d(x_k,\cD)}{\sum_{k\leq n} \lambda_k}<\infty\,\, \text{a.s.}  
\end{align*}}
\end{lemma}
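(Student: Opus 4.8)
The plan is to bound the single quantity $d(x_n,\cD)$ directly, by proving a one-step stochastic contraction of its square and then passing to the desired first-power averaged bound through a martingale argument. Since $(x_n)$ is bounded almost surely, it suffices to argue on each event $\Omega_M=\{\sup_n\|x_n\|\le M\}$ separately (the constants below may depend on $M$ and on $\omega$), because $\bP(\bigcup_M\Omega_M)=1$. To make the conditional-expectation manipulations legitimate I would localize with the stopping time $\tau_M=\inf\{n:\|x_n\|>M\}$, so that every estimate invoking $\|x_n\|\le M$ is applied on the $\cF_n$-measurable event $\{n<\tau_M\}$, and finally let $M\to\infty$.

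The crux, and the step where Assumption~\ref{hyp:reg-domaine} enters, is a contraction estimate. Fix $n$ with $\|x_n\|\le M$ and pick $y\in\cD$ with $\|x_n-y\|\le d(x_n,\cD)+\epsilon$. Since $y\in\cD$ one has $y\in\mathrm{cl}(D_s)$ for $\mu$-a.e.\ $s$, so firm nonexpansiveness of the metric projection gives, for $\mu$-a.e.\ $s$,
\[
d(\Pi(s,x_n),\cD)^2\le\|\Pi(s,x_n)-y\|^2\le\|x_n-y\|^2-d(x_n,D_s)^2.
\]
Integrating against $\mu$ (which computes $\bE_n$ of the corresponding quantity at $\xi_{n+1}$), applying Assumption~\ref{hyp:reg-domaine} in the form $\int d(x_n,D_s)^2d\mu\ge\kappa_M\,d(x_n,\cD)^2$, and letting $\epsilon\downarrow0$ yields
\[
\bE_n\!\left[d(\Pi(\xi_{n+1},x_n),\cD)^2\right]\le(1-\kappa_M)\,d(x_n,\cD)^2 .
\]
This is decisive: although a single projection onto $D_{\xi_{n+1}}$ need not push the iterate into $\cD$, it contracts the squared distance to $\cD$ in conditional mean.

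I then turn this into a recursion for $d_n:=d(x_n,\cD)$. Since $d(\cdot,\cD)$ is $1$-Lipschitz, the relation $x_{n+1}=\Pi(\xi_{n+1},x_n)+\lambda_n\delta_{n+1}$ gives $d_{n+1}\le d(\Pi(\xi_{n+1},x_n),\cD)+\lambda_n\|\delta_{n+1}\|$. Squaring, taking $\bE_n$, inserting the contraction, bounding the cross term by $2\lambda_n d_n\,\bE_n\|\delta_{n+1}\|\le 2\lambda_n d_n\sqrt{B}$ (with $B$ an almost-sure bound on $\bE_n\|\delta_{n+1}\|^2$), and absorbing it via Young's inequality, I obtain
\[
\bE_n\!\left[d_{n+1}^2\right]\le\bigl(1-\tfrac{\kappa_M}{2}\bigr)\,d_n^2+c_0\lambda_n^2 .
\]
Taking full expectations (now using the $L^1$-bound $\sup_n\bE\|\delta_{n+1}\|^2<\infty$ and Cauchy--Schwarz on the cross term) and summing, the contraction factor $1-\kappa_M/2<1$ together with $\sum_n\lambda_n^2<\infty$ from Assumption~\ref{hyp:step-iid}(i) forces $\sum_n\bE[d_n^2]<\infty$, hence $\sum_n d_n^2<\infty$ almost surely.

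For the first-power bound I would deliberately avoid Cauchy--Schwarz on $\sum d_n$, which is too lossy: a squared bound of order $\sum_k\lambda_k$ is compatible with $d_n\sim\sqrt{\lambda_n}$ and thus $\sum d_n\gg\sum\lambda_n$. Instead, take square roots in the last display: by Jensen and subadditivity of $\sqrt{\cdot\,}$, $\bE_n d_{n+1}\le r\,d_n+\sqrt{c_0}\,\lambda_n$ with $r=\sqrt{1-\kappa_M/2}<1$, whence $(1-r)d_n\le(d_n-d_{n+1})+(d_{n+1}-\bE_n d_{n+1})+\sqrt{c_0}\,\lambda_n$. Summing, the first differences telescope to at most $d_1$; the increments $d_{n+1}-\bE_n d_{n+1}$ form a martingale whose predictable variation is at most $\sum_k\bE_k d_{k+1}^2\le\sum_k d_k^2+c_0\sum_k\lambda_k^2<\infty$ a.s., so it converges and stays bounded. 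Dividing by $\sum_{k\le n}\lambda_k$ and using $\sum_k\lambda_k=\infty$ (Assumption~\ref{hyp:step-iid}(i)) makes every right-hand term $O(\sum_{k\le n}\lambda_k)$, giving the claim on $\Omega_M$; letting $M\to\infty$ finishes. I expect the main obstacle to be exactly the contraction step together with its rigorous localization: one must choose the comparison point $y\in\cD$ common to almost all $\mathrm{cl}(D_s)$ so that Assumption~\ref{hyp:reg-domaine} applies, and carry the $\omega$-dependent constants $M,\kappa_M,B$ through the martingale estimates via the stopping-time truncation above (the hypothesis $\lambda_n/\lambda_{n+1}\to1$ is not needed for this route, but would deliver the sharper per-step estimate $\bE[d_n^2]=O(\lambda_n^2)$).
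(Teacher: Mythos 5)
Your proof is correct, and its first half is essentially the paper's own argument: firm nonexpansiveness of $\Pi(s,\cdot)$ against a point of $\cD$ (which lies in $\mathrm{cl}(D_s)$ for $\mu$-a.e.\ $s$), integration in $s$, Assumption~\ref{hyp:reg-domaine} to pass from $\int d(x_n,D_s)^2d\mu$ to $\kappa_M\,d(x_n,\cD)^2$, and localization via the $\cF_n$-measurable events $B_n^M=\cap_{k\leq n}\{\|x_k\|\leq M\}$ — the same indicator/stopping-time device you describe. Where you genuinely diverge is in how the $L^2$ contraction is converted into the averaged first-moment bound. The paper normalizes: it sets $\Delta_n=d(x_n,\cD)\chi_{B_n^M}/\lambda_n$ and establishes $\bE_n\Delta_{n+1}^2\leq\rho\,\Delta_n^2+c\,\bE_n\|\delta_{n+1}\|^2$ with $\rho<1$; it is exactly there that the hypothesis $\lambda_n/\lambda_{n+1}\to 1$ is consumed, since dividing the recursion by $\lambda_{n+1}^2$ introduces the factor $(\lambda_n/\lambda_{n+1})^2$ that must be absorbed into the contraction. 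This buys the per-iterate rate $\bE[d(x_n,\cD)^2\chi_{B_n^M}]=O(\lambda_n^2)$, from which the martingale part of $\sum_k t_k$ has summable increment variances and the predictable part is controlled by Jensen plus the same contraction. You instead keep the unnormalized recursion $\bE_n d_{n+1}^2\leq(1-\kappa_M/2)d_n^2+c_0\lambda_n^2$, deduce $\sum_n d_n^2<\infty$ a.s.\ (Robbins--Siegmund, or summation after taking expectations) together with the first-moment contraction $\bE_n d_{n+1}\leq r\,d_n+\sqrt{c_0}\,\lambda_n$, and telescope $(1-r)\sum_{n\leq N}d_n$; the martingale $\sum_n(d_{n+1}-\bE_n d_{n+1})$ converges a.s.\ because its conditional bracket is dominated by $\sum_n d_n^2+c_0\sum_n\lambda_n^2<\infty$. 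Your observation that $\lambda_n/\lambda_{n+1}\to 1$ is dispensable on this route is correct and mildly strengthens the lemma (the hypothesis is reinstated anyway in Proposition~\ref{prop:dsum} and Theorem~\ref{the:main}, where the lemma is applied); what the paper's normalization buys in exchange is the sharper per-step estimate $\bE[d(x_n,\cD)^2\chi_{B_n^M}]=O(\lambda_n^2)$, which your argument forgoes. Two small points of housekeeping, which your sketch already flags: the comparison point $y$ must be chosen measurably in $\omega$ (a measurable $\epsilon$-selection, or simply $\mathrm{proj}_{\mathrm{cl}(\cD)}(x_n)$ as the paper does), and the cross term should be handled by conditional Cauchy--Schwarz or by the pointwise bound $d(\Pi(s,x_n),\cD)\leq\|x_n-y\|$, since $d(\Pi(\xi_{n+1},x_n),\cD)$ and $\|\delta_{n+1}\|$ need not be conditionally independent.
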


\begin{proof}
  Consider an arbitrary point $u\in \cD$. By definition of $\cD$,
  $u\in D_s$ for all $s$ $\mu$-a.e.  For any $\beta>0$,
  \begin{align*}
    \|x_{n+1}- u\|^2 &\leq  (1+\beta)\|\Pi(\xi_{n+1},x_n)-u\|^2+\lambda_n^2(1+\frac 1\beta)\|\delta_{n+1}\|^2\,.
  \end{align*}
As $\Pi(\xi_{n+1},\,.\,)$ is firmly non-expansive,
  \begin{equation*}
    \|x_{n+1}- u\|^2 \leq  (1+\beta)\left(\|x_n-u\|^2-\|x_n-\Pi(\xi_{n+1},x_n)\|^2\right)  +\lambda_n^2(1+\frac 1\beta)\|\delta_{n+1}\|^2\,.
  \end{equation*}
  The above inequality holds for any $u\in \cD$ and thus for any $u\in
  \mathrm{cl}(\cD)$. It holds in particular when substituting $u$ with
  $\mathrm{proj}_{\mathrm{cl}(\cD)}(x_n)$. Remarking that
  $d(x_{n+1},\cD)\leq \|x_{n+1}-
  \mathrm{proj}_{\mathrm{cl}(\cD)}(x_n)\|$, it follows that
  \begin{equation*}
    d(x_{n+1},\cD)^2 \leq (1+\beta)\left(d(x_{n},\cD)^2-\|x_n-\Pi(\xi_{n+1},x_n)\|^2\right) 
    +\lambda_n^2(1+\frac 1\beta)\|\delta_{n+1}\|^2\,.
  \end{equation*}
\pb{Consider a fixed $M>0$, and denote by $B_n^M$ the probability event $\cap_{k\leq n}\{\|x_k\|\leq M\}$.
Denote by $\chi_B$ the characteristic function of a set $B$, equal to 1 on $B$ and to zero outside.
By Assumption~\ref{hyp:reg-domaine},
\begin{align*}
  \bE_n(\|x_n-\Pi(\xi_{n+1},x_n)\|^2\chi_{B_n^M}) &= \int
  \|x_n-\Pi(s,x_n)\|^2d\mu(s)\chi_{B_n^M}\\ &\geq \kappa_M\, d(x_n,\cD)^2\chi_{B_n^M}
\end{align*}
where $\kappa_M$ is the constant defined in Assumption~\ref{hyp:reg-domaine}.
Define $t_n=t_{n,M}$ as the random variable $t_n=d(x_n,\cD)^2\chi_{B_n^M}$.
Upon noting that $\chi_{B_{n+1}^M}\leq \chi_{B_{n}^M}$, we obtain
$$
   t_{n+1}^2 \leq (1+\beta)(1-\kappa_M)t_n^2
    +\lambda_n^2(1+\frac 1\beta)\|\delta_{n+1}\|^2\,.
$$
Taking the conditional expectation, $\bE_nt_{n+1}^2 \leq
(1+\beta)(1-\kappa_M)t_n^2 +\lambda_n^2(1+\frac
1\beta)\bE_n\|\delta_{n+1}\|^2$.  Define $\Delta_n = t_n/\lambda_n$. }
Using that $\lambda_n/\lambda_{n+1}\to 1$ and choosing $\beta$ small
enough, there exists constants $0<\rho<1$, $c>0$ and a deterministic
integer $n_0$ depending on the sequence $(\lambda_n)$ and the
constants $\beta$, $\kappa_M$ such that for all $n\geq n_0$,
  \begin{align}
    \bE_n(\Delta_{n+1}^2) &\leq \rho\,\Delta_n^2+ c\,\bE_n\|\delta_{n+1}\|^2\,.
\label{eq:lyapunov}
  \end{align}
Taking the expectation of both sides and using that $(\bE\|\delta_{n+1}\|^2)$ is bounded, we obtain
that the sequence $(\Delta_n)$ is uniformly bounded in $L^2(\Omega, \bR_+,\bP)$. Now consider the sums
$$
T_n = \sum_{k=n_0+1}^n \pb{t_k}\qquad \text{and} \qquad \varphi_n = \sum_{k=n_0+1}^n \lambda_k\,.
$$
Decompose $T_n = \sum_{k=n_0+1}^n \bE_{k-1}d(x_{k},\cD) + R_n$ where
$$
R_n = \sum_{k=n_0+1}^n (\pb{t_k}-\bE_{k-1}\pb{t_k})\,.
$$
Note that $R_n$ is an $\cF_n$-adapted martingale and 
\pb{$\bE((t_k-\bE_{k-1}t_k)^2)\leq \bE(t_k^2)\leq C \lambda_k^2$} for some finite constant $C = \sup_n\bE(\Delta_n^2)$.
As $\sum_k \lambda_k^2<\infty$, we deduce that $R_n$ converges a.s. to some r.v. $R_\infty$ which is finite $\bP$-a.e.
As a consequence, $R_n/\varphi_n$ tends a.s. to zero. On the other hand, by Jensen's inequality,
$$
T_n \leq  \sum_{k=n_0+1}^n \left(\bE_{k-1}\pb{t_k^2}\right)^{\frac 12} + \|R_n\|\,.
$$
By~(\ref{eq:lyapunov}) again and the assumption that $\bE_n\|\delta_{n+1}\|^2$ is bounded a.s., there exists
a finite r.v. $Z>0$ such that, almost surely, 
$\bE_n(\Delta_{n+1}^2) \leq \rho\,\Delta_n^2+ c\,Z$. Thus, there exists other constants $\rho<\rho_1<1$ and $c_1$ such that
$\bE_n(\Delta_{n+1}^2)^{1/2} \leq \rho_1\,\Delta_n+ c_1\,Z$. Using that $\lambda_n/\lambda_{n+1}\to 1$, we obtain
$$
\bE_n(\pb{t_{n+1}^2})^{1/2} \leq \rho_2\,\pb{t_{n+1}}+ c_1\lambda_{n+1}\,Z
$$
for some constants $\rho_1<\rho_2<1$. As a consequence,
$$
\frac{T_n}{\varphi_n} \leq \frac{c_2 Z}{1-\rho_2} + \frac{\|R_n\|}{(1-\rho_2)\varphi_n}\,.
$$
\pb{Therefore, for every $M>0$, the exist a probability one event on which $T_n/\varphi_n$ is bounded.
Hence, on a probability one set, for every integer $M>0$, the sequence
$$
 \frac{\sum_{k\leq n} d(x_k,\cD)\chi_{B_k^M}}{\sum_{k\leq n} \lambda_k}
$$
is bounded. As $(x_n)$ is bounded w.p.1., the conclusion follows.
}
\end{proof}


\begin{assum}
\label{hyp:Jreg} 
There exist $p\in\bN^*$ and $C\in L^2(\E,\bR_+,\mu)$ such that for any $x\in \cH$, $\lambda>0$,
$$
 {\|J_\lambda(s,x)-\Pi(s,x)\|}\leq \lambda\,C(s)(1+\|x\|^{p})
$$ 
and $\cZ_A(2p)\neq\emptyset$.
\end{assum}

\pb{We recall that  $J_\lambda(s,x)$ converges to the best approximation
$\Pi(s,x)$ of $x$ in $D_s$ when $\lambda\downarrow 0$. Assumption~\ref{hyp:Jreg}
provides an additional condition on the rate. Loosely speaking, the condition means that 
the resolvent value $J_\lambda(s,x)$ 
should be at distance $O(\lambda)$ from the projection $\Pi(s,x)$.
A sufficient condition will be provided in Section~\ref{sec:appli} in the case of subdifferentials.
}

The second condition $\cZ_A(2p)\neq\emptyset$ means that there exists a zero of $\underline A$, say $x^\star$, 
for which one can find a $(2p)$-integrable selection $\phi\in A(\,.\,,x^\star)$ such that $\int\phi d\mu=0$.
This is for instance the case if $|A(\,.\,,x^\star)|^{2p}$ is integrable.


\begin{prop}
    Let Assumptions~\ref{hyp:A},~\ref{hyp:step-iid},~\ref{hyp:reg-domaine} and~\ref{hyp:Jreg} hold true.
Suppose that $\lambda_n/\lambda_{n+1}\to 1$ as $n\to\infty$.
Then, the sequence $(x_n)$ given by~(\ref{eq:algo}) satisfies almost surely 
\begin{equation*}
\sup_n \  \frac{\sum_{k\leq n} d(x_{k},\cD)}{\sum_{k\leq n} \lambda_k}<\infty\,.  
\end{equation*}
\label{prop:dsum}
\end{prop}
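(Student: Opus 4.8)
The plan is to rewrite the algorithm~(\ref{eq:algo}) in the perturbed projection form~(\ref{eq:proj-perturb}) and then apply Lemma~\ref{lem:proj-perturb} essentially verbatim. Recalling that $\Pi(\xi_{n+1},x_n)=\lim_{\lambda\downarrow 0}J_\lambda(\xi_{n+1},x_n)$, I would set
$$
\delta_{n+1}=\frac{J_{\lambda_n}(\xi_{n+1},x_n)-\Pi(\xi_{n+1},x_n)}{\lambda_n}\,,
$$
so that $x_{n+1}=J_{\lambda_n}(\xi_{n+1},x_n)=\Pi(\xi_{n+1},x_n)+\lambda_n\delta_{n+1}$ holds identically. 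Since $J_{\lambda_n}$ and $\Pi$ are jointly measurable (as established in Section~\ref{sec:algo} and just before Assumption~\ref{hyp:reg-domaine}), while $x_n$ is $\cF_n$-measurable and $\xi_{n+1}$ is $\cF_{n+1}$-measurable, the variable $\delta_{n+1}$ is $\cF_{n+1}$-measurable, matching the measurability requirement of the lemma. The step-size condition $\lambda_n/\lambda_{n+1}\to 1$ is assumed in the proposition, so it remains only to verify the two standing hypotheses of Lemma~\ref{lem:proj-perturb}: almost sure boundedness of $(x_n)$, and boundedness of $(\bE_n\|\delta_{n+1}\|^2)$ both almost surely and in $L^1$.

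I would first dispose of the qualification conditions. Because $\mu$ is a probability measure, $L^{2p}(\mu)\subset L^2(\mu)$, whence $\cZ_A(2p)\subset\cZ_A(2)$, so Assumption~\ref{hyp:Jreg} forces $\cZ_A(2)\neq\emptyset$. Moreover, any $x^\star\in\cZ_A(2p)$ admits a measurable selection of $A(\,.\,,x^\star)$ and therefore lies in $\dom(S_A)=\cD$, so $\cD\neq\emptyset$ as the lemma requires. With $\cZ_A(2)\neq\emptyset$, Proposition~\ref{prop:stability}(i) shows that $(\|x_n-x^\star\|)$ converges almost surely, and in particular $(x_n)$ is bounded with probability one.

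The crux, and the only place where Assumption~\ref{hyp:Jreg} is decisive, is the conditional second-moment bound on $\delta_{n+1}$. The rate estimate of Assumption~\ref{hyp:Jreg} gives directly
$$
\|\delta_{n+1}\|^2\leq C(\xi_{n+1})^2\,(1+\|x_n\|^{p})^2\,,
$$
and, $\xi_{n+1}$ being independent of $\cF_n$ with law $\mu$ while $x_n$ is $\cF_n$-measurable, conditioning yields
$$
\bE_n\|\delta_{n+1}\|^2\leq (1+\|x_n\|^{p})^2\int C(s)^2\,d\mu(s)\,,
$$
with $\int C^2\,d\mu<\infty$ since $C\in L^2(E,\bR_+,\mu)$. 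On the almost sure event where $(x_n)$ is bounded the right-hand side is bounded, giving the a.s.\ boundedness; taking a further expectation and controlling $\bE((1+\|x_n\|^{p})^2)$ by $\sup_n\bE(\|x_n\|^{2p})$ via Cauchy--Schwarz gives the $L^1$ boundedness, the supremum being finite by Proposition~\ref{prop:stability}(iii) precisely because $\cZ_A(2p)\neq\emptyset$. All hypotheses of Lemma~\ref{lem:proj-perturb} then hold for the sequence $(x_n)$, and its conclusion is exactly the claimed estimate. I expect the only (minor) obstacle to be the bookkeeping linking the single assumption $\cZ_A(2p)\neq\emptyset$ to all three of its uses---nonemptiness of $\cD$, boundedness of $(x_n)$, and the uniform $L^{2p}$ moment bound---since the genuine analytic content is entirely carried by Lemma~\ref{lem:proj-perturb}.
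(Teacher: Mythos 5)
Your proposal is correct and takes essentially the same route as the paper: the same substitution $\delta_{n+1}=(J_{\lambda_n}(\xi_{n+1},x_n)-\Pi(\xi_{n+1},x_n))/\lambda_n$ casting~(\ref{eq:algo}) into the form~(\ref{eq:proj-perturb}), the same bound $\bE_n\|\delta_{n+1}\|^2\leq c(1+\|x_n\|^{2p})$ from Assumption~\ref{hyp:Jreg}, and the same appeal to Proposition~\ref{prop:stability} to verify the hypotheses of Lemma~\ref{lem:proj-perturb}. If anything, you are more explicit than the paper's terse proof in checking the side conditions ($\cD\neq\emptyset$, almost sure boundedness of $(x_n)$ via $\cZ_A(2p)\subset\cZ_A(2)$ and Proposition~\ref{prop:stability}(i)), which the paper leaves implicit.
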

\begin{proof} 
  The sequence $(x_n)$ satisfies (\ref{eq:proj-perturb}) if we set
$$\delta_{n+1} = (J_{\lambda_n}(\xi_{n+1},x_n)-\Pi(\xi_{n+1},x_n))/\lambda_n\,.$$
By Assumption~\ref{hyp:Jreg}, $\bE_n\|\delta_{n+1}\|^2\leq
c(1+\|x_n\|^{2p})$ for some constant $c>0$.  Therefore, by
Proposition~\ref{prop:stability}(iii), $\bE_n\|\delta_{n+1}\|^2$ is
uniformly bounded almost surely and in $L^1(\Omega,\cH,\bP)$. The
conclusion of Lemma~\ref{lem:proj-perturb} applies.
\end{proof}

\begin{theo}
\label{the:main}
  Let Assumptions \ref{hyp:A}--\ref{hyp:Jreg} hold true
and let $\lambda_n/\lambda_{n+1}\to 1$ as $n\to\infty$.
Consider the random sequence $(x_n)$ given by~(\ref{eq:algo}) with weighted averaged sequence $(\overline x_n)$. 
Then, almost surely, $(\overline x_n)$ converges weakly to a zero of  $\underline A$.
\end{theo}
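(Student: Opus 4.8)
The plan is to deduce the statement from Theorem~\ref{th:opial}. Since Assumptions~\ref{hyp:A}--\ref{hyp:Jreg} subsume Assumptions~\ref{hyp:A}--\ref{hyp:zer} (in particular $\cZ_A(2)\neq\emptyset$ and $\zer(\underline A)\subseteq\dom(\underline A)\subseteq\cD$, so that $\cD\neq\emptyset$), the only thing left to supply is a probability-one event $G\in\cF$ on which $(x_n)$ has the property $\overline{UI}$. I would build $G=B'$ as the intersection of the full-measure events furnished by Proposition~\ref{prop:stability} (on which $(x_n)$ is bounded) and by Proposition~\ref{prop:dsum} (on which $\sup_n\sum_{k\leq n}d(x_k,\cD)/\sum_{k\leq n}\lambda_k<\infty$). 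Both propositions apply under the present hypotheses, Proposition~\ref{prop:dsum} using Assumptions~\ref{hyp:A},~\ref{hyp:step-iid},~\ref{hyp:reg-domaine},~\ref{hyp:Jreg} and $\lambda_n/\lambda_{n+1}\to1$, and $B'\in\cF$ has probability one. I then fix $\omega\in B'$ and argue pointwise, writing $Y_k(s)=\|A_{\lambda_k}(s,x_k)\|$ and $\overline Y_n$ for its weighted average.

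For each $k$, I would choose $z_k\in\cD$ with $\|z_k-x_k\|\leq 2d(x_k,\cD)$; as $(x_n)$ is bounded and $\cD\neq\emptyset$, the distances $d(x_k,\cD)$ are bounded, so all $z_k$ lie in a single bounded set $K=K_\omega$. Because $A_{\lambda_k}(s,\,.\,)$ is $1/\lambda_k$-Lipschitz and $z_k\in D_s$ for $\mu$-a.e.\ $s$ (whence $\|A_{\lambda_k}(s,z_k)\|\leq\|A_0(s,z_k)\|$), the bound~(\ref{eq:separationAlambda}) gives $Y_k(s)\leq\|A_0(s,z_k)\|+2d(x_k,\cD)/\lambda_k$. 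Weighting by $\lambda_k$, summing, and dividing by $\sum_{k\leq n}\lambda_k$ yields
$$
\overline Y_n(s)\leq \frac{\sum_{k\leq n}\lambda_k\,\|A_0(s,z_k)\|}{\sum_{k\leq n}\lambda_k}+\frac{2\sum_{k\leq n}d(x_k,\cD)}{\sum_{k\leq n}\lambda_k}\,,
$$
so it suffices to prove that each term on the right is uniformly integrable as a sequence in $n$.

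The second term is bounded, by the choice of $\omega\in B'$ and Proposition~\ref{prop:dsum}, by a finite constant that does not depend on $s$; a sequence of uniformly bounded constants is trivially uniformly integrable. For the first term, the key point is that, since $(z_k)\subset K\cap\cD$, Assumption~\ref{hyp:A0bound} makes the family $(\|A_0(\,.\,,z_k)\|)_k$ uniformly integrable (and in particular $L^1$-bounded, with measurability granted by Lemma~\ref{lem:Aumann}), and I would then use that convex combinations of a uniformly integrable family are again uniformly integrable. On the probability space $(E,\cE,\mu)$ this follows from the Dunford--Pettis characterization (see \cite[Proposition II-5-2]{neveu1964bases}): uniform integrability amounts to $L^1$-boundedness together with the property that $\forall\epsilon>0\,\exists\delta>0:\mu(H)<\delta\Rightarrow\sup_k\int_H\|A_0(\,.\,,z_k)\|\,d\mu<\epsilon$, and both are inherited by the weighted averages $\overline{(\,\cdot\,)}_n$ since $\int_H$ and $\int$ of a convex combination are the corresponding convex combinations of the individual integrals. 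Hence the first term is uniformly integrable, and being a sum of two uniformly integrable sequences, so is $(\overline Y_n)$. Thus $(x_n(\omega))$ has property $\overline{UI}$ for every $\omega\in B'$.

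Finally, applying Theorem~\ref{th:opial} with $G=B'$ (measurable, with $\bP(B')=1$) gives that almost surely $(\overline x_n)$ converges weakly to a zero of $\underline A$. The main obstacle I anticipate is not the convergence machinery, which is entirely delegated to Theorem~\ref{th:opial} and Proposition~\ref{prop:dsum}, but the uniform-integrability bookkeeping: establishing that convex combinations preserve uniform integrability, and handling the selection of $z_k$ cleanly. The selection is purely pointwise in the fixed $\omega$, so no joint measurability in $\omega$ is needed; the only genuinely delicate case is $d(x_k,\cD)=0$ with $x_k\notin\cD$, which I would dispatch by a density argument within $\mathrm{cl}(\cD)$, picking $z_k\in\cD$ with $\|z_k-x_k\|\leq 2d(x_k,\cD)+2^{-k}$, so that the extra slack $\sum_k 2^{-k}\leq 1$ is harmless after dividing by $\sum_{k\leq n}\lambda_k\to\infty$.
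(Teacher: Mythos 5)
Your proposal is correct and follows essentially the same route as the paper's proof: the same choice of $z_k\in\cD$ with $\|z_k-x_k\|\leq 2d(x_k,\cD)$, the same Lipschitz decomposition and bound $\|A_{\lambda_k}(s,z_k)\|\leq\|A_0(s,z_k)\|$, the second term controlled by Proposition~\ref{prop:dsum} and the first by Assumption~\ref{hyp:A0bound}, concluding via the Opial machinery (the paper cites Theorem~\ref{th:CVcond} at this point, but your appeal to Theorem~\ref{th:opial} is the accurate formulation). Your extra bookkeeping --- the $2^{-k}$ slack covering the degenerate case $d(x_k,\cD)=0$ with $x_k\notin\cD$, the boundedness of $(z_k)$ needed to invoke Assumption~\ref{hyp:A0bound}, and the Dunford--Pettis argument that weighted averages preserve uniform integrability --- merely makes explicit details the paper leaves implicit, and is sound.
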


\begin{proof}
For every $n$, choose any point $z_n\in \cD$  such that $\|z_n-x_n\|\leq 2 d(x_{n},\cD)$.
As $A_\lambda(s,\,.\,)$ is $\frac 1\lambda$-Lipschitz continuous, 
\begin{align*}
  \|A_{\lambda_n}(s,x_n)\| &\leq \|A_{\lambda_n}(s,z_n)\| + \frac  {2d(x_{n},\cD)}{\lambda_n}\,. 
\end{align*}
Using moreover that $\|A_{\lambda_n}(s,z_n)\|\leq \|A_0(s,z_n)\|$,
\begin{align*}
\frac{\sum_{k=1}^n\lambda_k\|A_{\lambda_k}(s,x_k)\|}{\sum_{k=1}^n\lambda_k}  &\leq \frac{\sum_{k=1}^n\lambda_k\|A_0(s,z_k)\|}{\sum_{k=1}^n\lambda_k} 
+  2\frac{\sum_{k=1}^nd(x_{k},\cD)}{\sum_{k=1}^n\lambda_k} \,.
\end{align*}
By Proposition~\ref{prop:dsum}, 
\begin{align}
\frac{\sum_{k=1}^n\lambda_k\|A_{\lambda_k}(s,x_k)\|}{\sum_{k=1}^n\lambda_k}  &\leq \frac{\sum_{k=1}^n\lambda_k\|A_0(s,z_k)\|}{\sum_{k=1}^n\lambda_k} +  C'
\label{eq:av-unif-int}
\end{align}
where $C'$ is a r.v. independent of $n$ and $s$ and which is finite $\bP$-a.e. 
By Assumption~\ref{hyp:A0bound}, the family $\|A_0(\,.\,,z_k(\omega))\|$ is uniformly integrable for almost every $\omega$.
Thus, the same holds for the corresponding averaged sequence, which in turn implies that
the functions of $s$ given by the lhs of~(\ref{eq:av-unif-int}) are uniformly integrable.
The conclusion follows from Theorem~\ref{th:CVcond}.
\end{proof}

\pb{
\subsection{Strong monotonicity and strong convergence}

We prove the following.

\begin{theo}
Let Assumptions~\ref{hyp:A},~\ref{hyp:step-iid} hold true.
Assume that for every $s\in E$, $A(s,\,.\,)$ is strongly monotone with modulus $\alpha(s)$ where
$\alpha:E\to\bR_+$ is a measurable function such that $\bP(\alpha(\xi_1)\neq 0)>0$.
Then $\underline A$ is strongly monotone and, as such, admits a unique zero $x^\star$. If $x^\star\in \cZ_A(2)$ then, almost surely, the sequence $(x_n)$ defined by~(\ref{eq:algo})
converges strongly to $x^\star$.
\end{theo}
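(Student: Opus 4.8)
The plan is to derive the strong monotonicity of $\underline A$ by integration, and then to upgrade the Fej\'er-type estimate of Proposition~\ref{prop:stability} into a contraction toward the unique zero by means of a strengthened version of Lemma~\ref{lem:ps}. For the first part, pick $(x,v),(x',v')\in\graph(\underline A)$ and write $v=\int\phi\,d\mu$, $v'=\int\phi'\,d\mu$ with $\phi\in S_A^1(x)$, $\phi'\in S_A^1(x')$. Since $\phi(s)\in A(s,x)$, $\phi'(s)\in A(s,x')$ and each $A(s,\,.\,)$ is $\alpha(s)$-strongly monotone, integrating $\ps{\phi(s)-\phi'(s),x-x'}\ge\alpha(s)\|x-x'\|^2$ against $\mu$ yields $\ps{v-v',x-x'}\ge\bar\alpha\,\|x-x'\|^2$ with $\bar\alpha=\bE(\alpha(\xi_1))>0$, the positivity coming from $\alpha\ge0$ and $\bP(\alpha(\xi_1)\ne0)>0$. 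Hence $\underline A$ is $\bar\alpha$-strongly monotone, and $0\in\underline A(x)\cap\underline A(x')$ forces $0\ge\bar\alpha\|x-x'\|^2$, giving uniqueness of the zero $x^\star$ (its existence being supplied by the hypothesis $\cZ_A(2)\ne\emptyset$ of the second assertion, or by surjectivity when $\underline A$ is in addition maximal).

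For the convergence, fix $x^\star\in\cZ_A(2)$ and a selection $\phi\in S_A^2(x^\star)$ with $\int\phi\,d\mu=0$, and set $V_n=\|x_n-x^\star\|^2$ and $a_n(s)=A_{\lambda_n}(s,x_n)$. The engine of the proof is a strong-monotone refinement of Lemma~\ref{lem:ps}: replacing in its proof the monotonicity inequality $\ps{a_n(s)-\phi(s),J_{\lambda_n}(s,x_n)-x^\star}\ge0$ by the strong-monotonicity bound $\ge\alpha(s)\|J_{\lambda_n}(s,x_n)-x^\star\|^2$ yields, for any $\beta>0$,
$$\ps{a_n(s)-\phi(s),x_n-x^\star}\ge\lambda_n(1-\beta)\|a_n(s)\|^2-\frac{\lambda_n}{4\beta}\|\phi(s)\|^2+\alpha(s)\,\|J_{\lambda_n}(s,x_n)-x^\star\|^2.$$
Expanding $V_{n+1}$ exactly as in Proposition~\ref{prop:stability}, taking $\bE_n$ (that is, integrating $\xi_{n+1}$ against $\mu$ and using $\int\phi\,d\mu=0$), and inserting this bound introduces an extra dissipation term $-2\lambda_n\int\alpha(s)\|J_{\lambda_n}(s,x_n)-x^\star\|^2d\mu$ absent from Proposition~\ref{prop:stability}.

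The main obstacle is to turn this dissipation into a genuine contraction factor on $V_n$: the term involves $J_{\lambda_n}(s,x_n)=x_n-\lambda_n a_n(s)$ rather than $x_n$, and $\alpha$ is assumed neither bounded nor integrable against $\|a_n(s)\|^2$. I would resolve both points by bounding $\alpha$ from below by a bounded truncation. Choose $\delta>0$ with $p_\delta=\bP(\alpha(\xi_1)\ge\delta)>0$ and set $\tilde\alpha(s)=\delta\,\indic_{\{\alpha(s)\ge\delta\}}\le\alpha(s)$, so that each $A(s,\,.\,)$ is still $\tilde\alpha(s)$-strongly monotone, $\tilde\alpha\le\delta$, and $a:=\int\tilde\alpha\,d\mu=\delta p_\delta>0$. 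Replacing $\alpha$ by $\tilde\alpha$ in the dissipation term (which only weakens the upper bound) and using $\|J_{\lambda_n}(s,x_n)-x^\star\|^2\ge\tfrac12V_n-\lambda_n^2\|a_n(s)\|^2$ gives
$$-2\lambda_n\int\tilde\alpha(s)\,\|J_{\lambda_n}(s,x_n)-x^\star\|^2d\mu\ \le\ -a\lambda_n V_n+2\delta\lambda_n^3\int\|a_n(s)\|^2d\mu.$$
Collecting all $\int\|a_n\|^2d\mu$ contributions, their net coefficient is $(2\beta-1+2\delta\lambda_n)\lambda_n^2$, which becomes nonpositive once $\beta<\tfrac12$ is fixed and $n$ is large (recall $\lambda_n\to0$ since $(\lambda_n)\in\ell^2$); discarding them leaves, for $n\ge n_0$,
$$\bE_nV_{n+1}\ \le\ (1-a\lambda_n)V_n+\frac{c}{2\beta}\lambda_n^2,\qquad c=\int\|\phi\|^2d\mu<\infty.$$

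Finally I would apply the Robbins--Siegmund theorem \cite{robbins1971convergence} to the last display with $b_n=a\lambda_n V_n\ge0$ and the summable remainder $\tfrac{c}{2\beta}\lambda_n^2$ (summable because $(\lambda_n)\in\ell^2$): it yields that $V_n$ converges almost surely and that $\sum_n a\lambda_n V_n<\infty$ almost surely. Since $a>0$ and $\sum_n\lambda_n=\infty$, a strictly positive limit of $V_n$ would make $\sum_n\lambda_n V_n$ diverge; therefore $V_n\to0$, i.e.\ $x_n\to x^\star$ strongly, almost surely. It is worth noting that this route is self-contained: it relies only on Proposition~\ref{prop:stability} and Robbins--Siegmund, and uses neither Assumption~\ref{hyp:Abar} nor the cluster-point analysis of Theorem~\ref{th:CVcond}, the strong monotonicity making the averaging argument unnecessary.
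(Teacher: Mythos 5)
Your proposal is correct and follows essentially the same route as the paper's proof: strengthen Lemma~\ref{lem:ps} with the term $\alpha(s)\|J_\lambda(s,x)-x^\star\|^2$, insert it into the Fej\'er expansion of Proposition~\ref{prop:stability}, truncate $\alpha$ to a bounded modulus with positive mean, and conclude by Robbins--Siegmund together with $(\lambda_n)\notin\ell^1$. The only (harmless) difference is technical: you pass from $\|J_{\lambda_n}(s,x_n)-x^\star\|^2$ to $\tfrac12\|x_n-x^\star\|^2-\lambda_n^2\|A_{\lambda_n}(s,x_n)\|^2$ and absorb the Yosida terms by taking $\beta<\tfrac12$ and $n$ large, whereas the paper notes $J_{\lambda_n}(\xi_{n+1},x_n)=x_{n+1}$, lower-bounds $\|x_{n+1}-x^\star\|^2$ by $\|x_n-x^\star\|^2-\lambda_n\|A_{\lambda_n}(\xi_{n+1},x_n)\|^2-\lambda_n\|x_n-x^\star\|^2$ with $\alpha\wedge 1$, and disposes of the residual $\lambda_n^2$-terms via Proposition~\ref{prop:stability}(ii).
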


\begin{proof}
  Set $(x,y)$ and $(x',y')$ in $\graph(\underline A)$. Let $\phi$ and $\phi'$ be integrable selections of $A(\,.\,,x)$ and $A(\,.\,,x')$ respectively
such that $y=\int\phi d\mu$ and $y'=\int\phi' d\mu$. Then,
$$
\ps{\phi(s)-\phi'(s),x-x'}\geq \alpha(s) \|x-x'\|^2\,.
$$
Integrating over $s$ and noting that $\int \alpha d\mu>0$ by hypothesis, we deduce that $\underline A$ is strongly monotone.
Let $x^\star$ be its unique zero and assume that $x^\star\in \cZ_A(2)$. Note that there is no restriction in assuming that $\alpha(\,.\,)\leq 1$
(otherwise just replace $\alpha(\,.\,)$ with $\min (\alpha(\,.\,),1)$).

By strong monotonicity, 
the inequality~(\ref{eq:ps}) of Lemma~\ref{lem:ps} can be replaced by
\begin{equation*}
  \ps{A_\lambda(s,x)-\phi(s),x-u} \geq  \alpha(s)\| J_\lambda(s,x)-u\|^2+\lambda (1-\beta)  \|A_{\lambda}(s,x)\|^2
-\frac{\lambda}{4\beta}\|\phi(s)\|^2\,.
\end{equation*}
As a consequence, Equation~(\ref{eq:dev-mse}) can be replaced by
\begin{multline}
\label{eq:dev-mse-strg}
\|x_{n+1}-x^\star\|^2 \leq \|x_n-x^\star\|^2 -\lambda_n^2(1-2\beta)  \|A_{\lambda_n}(\xi_{n+1},x_n)\|^2 \\
-2\lambda_n\alpha(\xi_{n+1})\|x_{n+1}-x^\star\|^2+\frac{\lambda_n^2}{2\beta}\|\phi(\xi_{n+1})\|^2 -2\lambda_n\ps{\phi(\xi_{n+1}),x_n-x^\star}
\end{multline}
where $\phi$ is a measurable selection of $A(\,.\,,x^\star)$ such that $\int \phi d\mu=0$.
In the sequel, we shall simply set $\beta=\frac 12$. On the other hand, by straightforward algebra,
\begin{align*}
  \|x_{n+1}-x^\star\|^2 &\geq \|x_n-x^\star\|^2 + 2\ps{x_{n+1}-x_n,x_n-x^\star} \\
&= \|x_n-x^\star\|^2 - 2\lambda_n \ps{A_{\lambda_n}(\xi_{n+1},x_n),x_n-x^\star} \\
&\geq \|x_n-x^\star\|^2 - \lambda_n \|A_{\lambda_n}(\xi_{n+1},x_n)\|^2-\lambda_n\|x_n-x^\star\|^2
\end{align*}
and by plugging the above inequality into~(\ref{eq:dev-mse-strg}), using $\alpha(\,.\,)\leq 1$ and recalling $\beta=\frac 12$,
\begin{multline*}
\|x_{n+1}-x^\star\|^2 \leq (1+2\lambda_n^2)\|x_n-x^\star\|^2
-2\lambda_n\alpha(\xi_{n+1})\|x_{n}-x^\star\|^2
+2\lambda_n^2\|A_{\lambda_n}(\xi_{n+1},x_n)\|^2\\
+\lambda_n^2\|\phi(\xi_{n+1})\|^2 -2\lambda_n\ps{\phi(\xi_{n+1}),x_n-x^\star}\,.
\end{multline*}
Applying the conditional expectation $\bE_n$ on both sides, and setting $\bar \alpha = \int\alpha d\mu$, 
and $V_n = 2\E_n\|A_{\lambda_n}(\xi_{n+1},x_n)\|^2 + \int \|\phi\|^2d\mu$, we obtain
\begin{equation*}
\E_n(\|x_{n+1}-x^\star\|^2) \leq (1+2\lambda_n^2)\|x_n-x^\star\|^2
-2\lambda_n\bar \alpha \|x_{n}-x^\star\|^2 + \lambda_n^2 V_n\,.
\end{equation*}
By Proposition~\ref{prop:stability}(ii) and the fact that $(\lambda_n)\in \ell^2$, one has $\sum_n\lambda_n^2V_n<\infty$ a.s.
Therefore, by \cite{robbins1971convergence}, 
$$
\sum_n\lambda_n\bar \alpha \|x_{n}-x^\star\|^2 <\infty\ \text{a.s.}
$$
By the standing hypothesis, $\bar\alpha>0$, thus $\sum_n\lambda_n \|x_{n}-x^\star\|^2 <\infty$ a.s.
Since $\|x_{n}-x^\star\|$ converges a.s. by Proposition~\ref{prop:stability}(i) and since $(\lambda_n)\notin \ell^1$, it follows that $\|x_{n}-x^\star\|\to 0$.

\end{proof}

}

\section{Application to convex programming}
\label{sec:appli}

\subsection{Problem and Algorithm}


\pb{Consider the context of Example 3. 
Let $f:E\times \cH\to (-\infty,+\infty]$ be a normal convex integrand.
Denote by $F(x)=\int f(s,x)d\mu(x)$ the corresponding integral functional.
}
Identifying $\partial f$ with the operator $A$ of Section~\ref{sec:algo}, 
the resolvent $J_{\lambda}$ coincides with the proximity operator $(s,x)\mapsto \mathrm{prox}_{\lambda f(s,\,.\,)}(x)$ 
defined in~(\ref{eq:proximity-op}). The iterations~(\ref{eq:algo}) write
\begin{equation}
x_{n+1} = \mathrm{prox}_{\lambda_n f(\xi_{n+1},\,.\,)}(x_n)\,.
\label{eq:algo-fct}
\end{equation}
The aim is to prove the almost sure weak convergence in average of $(x_n)$ to a minimizer of $F$ (assumed to exist).
We denote by $\partial f_0(s,x)$ the element of $\partial f(s,x)$ with smallest norm.
We denote by $\cD$ the essential intersection of the sets  $D_s=\dom(\partial f(s,\,.\,))$ for $s\in E$.

\begin{assum}
\hfill 
\begin{enumerate}[(i)]
\item $f:E\times \cH\to (-\infty,+\infty]$ is a normal convex integrand.
\item $F$ is proper and lower semicontinuous.
\item For all $x\in \cH$, $\partial F(x) = \int \partial f(s,x)d\mu(s)$.
\item The set of minimizers of $F$ is non-empty and included in $\cZ_{\partial f}(2)$.
\end{enumerate} 
\label{hyp:integrand}
\end{assum}

\smallskip

Assumption~\ref{hyp:integrand}(iii) has been discussed in Example 3.

\subsection{Case of a common domain}\hspace*{\fill} 
\label{sec:CVdom-fct}

\begin{theo}
 Let Assumptions~\ref{hyp:step-iid} and \ref{hyp:integrand} hold true.
Assume that the domains $D_s$ coincide for all $s$ outside a $\mu$-negligible set.
Assume that for any bounded set $K\subset \cH$, the family $(\|\partial f_0(\,.\,,x)\|:x\in K\cap \cD)$ is uniformly integrable.
Consider the random sequence $(x_n)$ given by~(\ref{eq:algo-fct}) with weighted averaged sequence $(\overline x_n)$. 
Then, almost surely,  $(\overline x_n)$ converges weakly to a minimizer of $F$. 
\label{the:CVdom-fct}
\end{theo}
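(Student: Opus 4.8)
The plan is to recognize Theorem~\ref{the:CVdom-fct} as a direct specialization of Corollary~\ref{coro:CVdom}, obtained by setting $A(s,\,.\,)=\partial f(s,\,.\,)$ for every $s\in E$. With this identification, the resolvent $J_\lambda(s,\,.\,)$ is the proximity operator $\prox_{\lambda f(s,\,.\,)}$, so that the abstract iteration~(\ref{eq:algo}) becomes exactly~(\ref{eq:algo-fct}); the least-norm element $A_0(s,x)$ is $\partial f_0(s,x)$; and, by Fermat's rule, $\zer(\underline A)$ coincides with the set $\arg\min F$. It therefore suffices to verify that Assumptions~\ref{hyp:A}, \ref{hyp:Abar}, \ref{hyp:zer}, and \ref{hyp:A0bound} all hold under the standing hypotheses, after which the conclusion is immediate.

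To verify Assumption~\ref{hyp:A}, I would first note that since $f$ is a normal convex integrand (Assumption~\ref{hyp:integrand}(i)), the function $f(s,\,.\,)$ is proper, lower semicontinuous and convex for every $s$, so its subdifferential $\partial f(s,\,.\,)$ is maximal monotone; this gives point (i). The delicate point is the measurability requirement (ii), namely that $s\mapsto\prox_{\lambda f(s,\,.\,)}(x)$ is $\cE/\cB(\cH)$-measurable for fixed $\lambda>0$ and $x$. I would argue this using the theory of normal integrands: for fixed $x$, the map $(s,t)\mapsto \lambda f(s,t)+\tfrac12\|t-x\|^2$ is again a normal convex integrand, and its argmin, which is single-valued by strong convexity, depends measurably on $s$ through the measurable-selection properties of normal integrands \cite{rockafellar1971convex}. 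This is the one genuinely non-routine step of the proof.

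For Assumption~\ref{hyp:Abar}, I would invoke Assumption~\ref{hyp:integrand}(iii), which states $\underline A(x)=\partial F(x)$ for all $x$; since $F$ is proper, lower semicontinuous and convex (Assumption~\ref{hyp:integrand}(ii)), its subdifferential $\partial F$ is maximal monotone, exactly as in Example~3. Assumption~\ref{hyp:zer} then follows because $\zer(\underline A)=\zer(\partial F)$ equals the set of minimizers of $F$, which by Assumption~\ref{hyp:integrand}(iv) is non-empty and included in $\cZ_{\partial f}(2)=\cZ_A(2)$. Finally, Assumption~\ref{hyp:A0bound} is precisely the uniform integrability hypothesis stated in the theorem, once $\|A_0(\,.\,,x)\|$ is identified with $\|\partial f_0(\,.\,,x)\|$ on $K\cap\cD$.

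Having checked all four assumptions, together with the common-domain hypothesis and Assumption~\ref{hyp:step-iid}, the hypotheses of Corollary~\ref{coro:CVdom} are met. Its conclusion yields that, almost surely, $(\overline x_n)$ converges weakly to a zero of $\underline A=\partial F$, that is, to a minimizer of $F$, which is the desired statement. The main obstacle is the measurability verification in Assumption~\ref{hyp:A}(ii); the remaining verifications are direct translations of the convex-programming hypotheses of Assumption~\ref{hyp:integrand} into the abstract monotone-operator framework.
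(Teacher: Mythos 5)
Your proposal is correct and follows essentially the same route as the paper: identify $A=\partial f$, check Assumptions~\ref{hyp:A}--\ref{hyp:A0bound} (maximal monotonicity of $\partial f(s,\,.\,)$, measurability of the proximity operator via normal-integrand/measurable-selection results as in \cite{rockafellar1968integrals} and \cite{rockafellar1971convex}, maximality of $\partial F$ from Assumption~\ref{hyp:integrand}(ii)--(iii), and the stated uniform integrability), then invoke Corollary~\ref{coro:CVdom}. You are also right that the measurability of $s\mapsto\prox_{\lambda f(s,\,.\,)}(x)$ is the only non-routine verification, which is exactly where the paper's citations do the work.
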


\begin{proof}
  We prove that $A=\partial f$ satisfies the conditions of
  Assumptions~\ref{hyp:A} and \ref{hyp:Abar} and the conclusion
  follows from Corollary~\ref{coro:CVdom}.  Operator $\partial
  f(s,\,.\,)$ is maximal monotone for any given $s\in E$, see
  \emph{e.g.} \cite[Theorem 21.2]{bauschke2011convex}.  
  For a fixed $x\in \cH$, $\partial f(\,.\,,x)$ is measurable, see
  \cite[Corollary 4.6]{rockafellar1969measurable} and \cite[Theorem
  3]{papageorgiou1997convex} in the infinite dimensional case.  The
  proximity operator $J_\lambda(\,.\,,x)$ is $\cE/\cB(\cH)$
  measurable, see \cite[Lemma 4]{rockafellar1968integrals} (combined
  with \cite[Proposition 2]{rockafellar1971convex} in the infinite
  dimensional case).  Therefore, $A=\partial f$ satisfies the
  conditions in Assumption~\ref{hyp:A}.

  Note that $F$ is a convex function.  By Assumption~\ref{hyp:integrand}(ii)
and \cite[Theorem 21.2]{bauschke2011convex}, $\partial F$ is maximal monotone.
Using moreover Assumption~\ref{hyp:integrand}(iii), the condition in
  Assumption~\ref{hyp:Abar} is satisfied.
Finally, Assumptions~\ref{hyp:A}--\ref{hyp:A0bound} are fulfilled and
the conclusion follows from Corollary~\ref{coro:CVdom}.
\end{proof}

\subsection{Case of distinct domains}

\pb{When domains $D_s$ are possibly distinct, the convergence result will follow from Theorem~\ref{the:main}.
We should therefore verify the conditions under which the latter holds.
Checking Assumptions~\ref{hyp:A}--\ref{hyp:A0bound} follows the same lines as in Section~\ref{sec:CVdom-fct} and is relatively easy.
Assumption~\ref{hyp:reg-domaine} will be kept as a standing assumption. 
The goal is therefore to provide a verifiable
condition under which Assumption~\ref{hyp:Jreg} holds. This condition is given as follows.

\begin{assum}
\label{hyp:dom-diff}
There exists $p\in \bN^*$ and $C\in L^2(E,\bR_+,\mu)$ such that for all $s\in E$ $\mu$-a.e. and all $x\in \dom(\partial f(s,\,.\,))$, 
$$\|\partial f_0(s,x)\|\leq C(s)(1+\|x\|^p)$$
and $\cZ_A(2p)\neq\emptyset$.
Moreover, $\dom(\partial f(s,\,.\,))$ is closed $\mu$-a.e.
\end{assum}
\smallskip

In order to verify that the above condition is indeed sufficient to ensure that Assumption~\ref{hyp:Jreg} holds, we need the following lemma.}

\begin{lemma}
\label{lem:regSousdif}
 Let $g:\cH\to (-\infty,+\infty]$ be a proper lower semicontinuous convex function. Consider $x\in \cH$ and $\lambda>0$. Let $\pi$ be the projection of $x$ onto $\overline{\dom}(g)$. Assume that $\partial g(\pi)\neq\emptyset$. Then,
$
\|\mathrm{prox}_{\lambda g}(x)-\pi\|\leq 2\lambda \|\partial g_0(\pi)\|\,.
$
\end{lemma}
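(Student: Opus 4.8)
The plan is to work directly with the variational characterizations of the two points involved: the proximity point $p \eqdef \prox_{\lambda g}(x)$ and the projection $\pi = \proj_{\overline{\dom}(g)}(x)$. First I would record that $p$ is characterized by the inclusion $\lambda^{-1}(x-p)\in \partial g(p)$; in particular $\partial g(p)\neq\emptyset$, so that $p\in\dom(\partial g)\subset \dom(g)\subset \overline{\dom}(g)$. This last membership is the observation on which everything hinges. On the other side, let $u=\partial g_0(\pi)$, which is a well-defined element of $\partial g(\pi)$ by hypothesis, with $\|u\|=\|\partial g_0(\pi)\|$.

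Next I would exploit the monotonicity of $\partial g$ applied to the two pairs $(p,\lambda^{-1}(x-p))$ and $(\pi,u)$ of its graph. After multiplying by $\lambda>0$, this yields $\langle x-p-\lambda u,\, p-\pi\rangle\geq 0$. Writing $x-p=(x-\pi)+(\pi-p)$ and using $\langle \pi-p,\,p-\pi\rangle=-\|p-\pi\|^2$, this rearranges into
$$
\|p-\pi\|^2 \leq \langle x-\pi,\,p-\pi\rangle -\lambda\langle u,\,p-\pi\rangle\,.
$$

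The key step is then to discard the first term on the right-hand side. Since $\dom(g)$ is convex, $\overline{\dom}(g)$ is closed convex, and the obtuse-angle characterization of the projection gives $\langle x-\pi,\,z-\pi\rangle\leq 0$ for every $z\in\overline{\dom}(g)$. Because $p\in\overline{\dom}(g)$ as noted above, I may take $z=p$ and conclude $\langle x-\pi,\,p-\pi\rangle\leq 0$. Hence $\|p-\pi\|^2\leq -\lambda\langle u,\,p-\pi\rangle\leq \lambda\|u\|\,\|p-\pi\|$ by Cauchy--Schwarz, and dividing by $\|p-\pi\|$ (the case $p=\pi$ being trivial) gives $\|p-\pi\|\leq \lambda\|\partial g_0(\pi)\|$, which is in fact sharper than the claimed bound with factor $2$.

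The only genuinely delicate point is verifying that $p\in\overline{\dom}(g)$, since this is precisely what permits substituting $z=p$ into the projection inequality and thereby killing the term $\langle x-\pi,\,p-\pi\rangle$; once this membership is secured, the remainder is a short monotonicity-plus-Cauchy--Schwarz computation with no real obstacle. The factor $2$ appearing in the statement simply leaves slack that the argument does not need, presumably to accommodate approximate projections elsewhere in the paper.
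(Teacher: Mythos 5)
Your proof is correct, and it takes a genuinely different---and in fact sharper---route than the paper's. The paper never applies the projection inequality directly at $p=\prox_{\lambda g}(x)$; instead it introduces an auxiliary point $q$, the minimizer of the linearized objective $g(\pi)+\ps{\varphi,y-\pi}+\|y-x\|^2/(2\lambda)$ over the half-space $H=\{y:\ps{y-\pi,x-\pi}\le 0\}$, invokes the Karush-Kuhn-Tucker conditions (a multiplier $\alpha\ge0$ with complementary slackness, and a case distinction on $\alpha$) to get $\|j-\pi\|\le\|q-\pi\|$ and $\|q-x\|\le\lambda\|\varphi\|$, and then uses $\|q-\pi\|\le\|q-x\|+\|x-\pi\|\le 2\|q-x\|$, which is exactly where the factor $2$ enters. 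Your argument short-circuits all of this: the one observation that $p\in\dom(\partial g)\subset\overline{\dom}(g)$ (since $(x-p)/\lambda\in\partial g(p)$) lets you test the obtuse-angle characterization of $\pi=\proj_{\overline{\dom}(g)}(x)$ at $z=p$ and discard $\ps{x-\pi,p-\pi}$ inside the same monotonicity estimate the paper starts from, yielding the constant $1$ instead of $2$. Your constant is even tight: in $\cH=\bR^2$ with $g=\iota_{\{y_1\le 0\}}+cy_2$ and $x=(1,0)$ one gets $\pi=(0,0)$, $\prox_{\lambda g}(x)=(0,-\lambda c)$, so $\|\prox_{\lambda g}(x)-\pi\|=\lambda\|\partial g_0(\pi)\|$. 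What your approach buys is brevity, a better constant, and the avoidance of the KKT machinery (the paper's phrase ``orthogonal projection matrix'' even has a finite-dimensional flavor that your argument renders moot); what the paper's detour through $q$ buys is nothing essential here---the factor $2$ is an artifact of that detour, not deliberate slack, and since the lemma is consumed in Theorem~\ref{the:main-fct} with all constants absorbed, your sharper bound is harmless downstream.
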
 
\begin{proof}
When $x=\pi$, the result is standard \cite[Corollary 23.10]{bauschke2011convex} (and the factor~2 in the inequality can even be omitted). We assume in the sequel that $x\neq \pi$.
  Define $j = \mathrm{prox}_{\lambda g}(x)$, $\varphi = \partial g_0(\pi)$ and 
  \begin{equation}
q = \arg\min_{y\in H} g(\pi)+\ps{\varphi,y-\pi}+\frac{\|y-x\|^2}{2\lambda}\label{eq:q}
\end{equation}
where $H$ is the half-space $\{y\in \cH:\, \ps{y-\pi,x-\pi}\leq 0\}$. 
\pb{By the Karush-Kuhn-Tucker conditions,
there exists $\alpha\geq 0$ such that $\lambda \varphi = -q +x-\alpha(x-\pi)$ along with the complementary slackness condition $\alpha \ps{q-\pi,x-\pi}=0$}.
 Now as $\varphi\in \partial g(\pi)$
and $(x-j)/\lambda\in\partial g(j)$, it follows by monotonicity of $\partial g$ that
\begin{align*}
  0&\leq \ps{\lambda \varphi- x+j,\pi-j} \\
&= \ps{j-q ,\pi-j}+\alpha \ps{x-\pi,j-\pi}.
\end{align*}
As $\ps{x-\pi,j-\pi}\leq 0$, we have $0\leq \ps{j-q ,\pi-j}$ which in turn implies that $\|j-\pi\|\leq \|q-\pi\|$.
\pb{As $q\in H$, it is clear that $\|x-\pi\|\leq \|q-x\|$ and thus $\|q-\pi\|\leq \|q-x\|+\|x-\pi\|\leq 2 \|q-x\|$.
Putting all pieces together, $\|j-\pi\|\leq 2 \|q-x\|$. Recall the identity, $q-x = -\lambda \varphi-\alpha(x-\pi)$.
If $\alpha=0$, the $\|q-x\|=\lambda\|\varphi\|$ and the conclusion  $\|j-\pi\|\leq 2\lambda\|\varphi\|$ follows.
If $\alpha>0$, the complementary slackness condition yields $\ps{q-\pi,x-\pi}=0$. 
Replacing $q$ by its expression as a function of $\alpha$, this allows to write $\alpha=\ps{\lambda\varphi, \pi-x}/\|x-\pi\|^2$.
Hence, $q-x = -\lambda P\varphi$ where $P$ is an orthogonal projection matrix.
Therefore, $\|q-x\|\leq \lambda \|\varphi\|$ and again, the conclusion $\|j-\pi\|\leq 2\lambda\|\varphi\|$ follows.
}
\end{proof}
\smallskip

\begin{theo}
 Let Assumptions~\ref{hyp:step-iid},~\ref{hyp:reg-domaine},~\ref{hyp:integrand}, and~\ref{hyp:dom-diff} hold true. Suppose that
$\lambda_n/\lambda_{n+1}\to 1$ as $n\to \infty$. 
Consider the random sequence $(x_n)$ given by~(\ref{eq:algo-fct}) with weighted averaged sequence $(\overline x_n)$. 
Then, almost surely,  $(\overline x_n)$ converges weakly to a minimizer of $F$. 
\label{the:main-fct}
\end{theo}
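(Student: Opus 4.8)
The plan is to verify that the operator $A=\partial f$ fulfils every hypothesis of Theorem~\ref{the:main} and then invoke that result directly. Theorem~\ref{the:main} requires Assumptions~\ref{hyp:A}--\ref{hyp:Jreg} together with $\lambda_n/\lambda_{n+1}\to 1$; the latter, as well as Assumptions~\ref{hyp:step-iid} and~\ref{hyp:reg-domaine}, are in force by hypothesis, so it remains to establish Assumptions~\ref{hyp:A}, \ref{hyp:Abar}, \ref{hyp:zer}, \ref{hyp:A0bound} and~\ref{hyp:Jreg} for $A=\partial f$.

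First I would dispose of Assumptions~\ref{hyp:A}--\ref{hyp:zer}, which do not involve the domains and are handled exactly as in the proof of Theorem~\ref{the:CVdom-fct}: maximal monotonicity of $\partial f(s,\,.\,)$ and measurability of $J_\lambda(\,.\,,x)=\prox_{\lambda f(\,.\,,x)}(x)$ yield Assumption~\ref{hyp:A}; Assumption~\ref{hyp:integrand}(ii)--(iii) identify $\underline A=\partial F$ with $F$ proper l.s.c.\ convex, whence $\underline A$ is maximal (Assumption~\ref{hyp:Abar}); and Assumption~\ref{hyp:integrand}(iv) gives $\emptyset\neq\zer(\underline A)=\arg\min F\subset \cZ_{\partial f}(2)$ (Assumption~\ref{hyp:zer}). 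For Assumption~\ref{hyp:A0bound} I would invoke the growth bound of Assumption~\ref{hyp:dom-diff}: for $\|x\|\leq M$ one has $\|\partial f_0(s,x)\|^2\leq C(s)^2(1+M^p)^2$, so that $\sup_{x\in K\cap\cD}\int\|\partial f_0(s,x)\|^2 d\mu\leq (1+M^p)^2\int C^2 d\mu<\infty$ since $C\in L^2(E,\bR_+,\mu)$; this is precisely condition~(\ref{eq:moment}) with $r_K=1$ and hence implies the required uniform integrability.

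The heart of the proof is the verification of Assumption~\ref{hyp:Jreg}, where Lemma~\ref{lem:regSousdif} is brought into play with $g=f(s,\,.\,)$. Since Assumption~\ref{hyp:dom-diff} guarantees that $D_s=\dom(\partial f(s,\,.\,))$ is closed for $\mu$-a.e.\ $s$, the projection $\Pi(s,x)=\proj_{\mathrm{cl}(D_s)}(x)$ actually lies in $D_s$, so $\partial f(s,\Pi(s,x))\neq\emptyset$, the lemma applies, and $\|J_\lambda(s,x)-\Pi(s,x)\|\leq 2\lambda\,\|\partial f_0(s,\Pi(s,x))\|$. It then remains to control $\|\partial f_0(s,\Pi(s,x))\|$ uniformly in $s$ by a power of $\|x\|$, and I expect the bounding of $\|\Pi(s,x)\|$ to be the most delicate point, since a naive estimate involves the $s$-dependent quantity $d(0,D_s)$. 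The way around this is to use that $\cD\neq\emptyset$ (a consequence of $\zer(\underline A)\neq\emptyset$ together with Lemma~\ref{lem:Aumann}): fixing some $u\in\cD$, one has $u\in D_s$ for $\mu$-a.e.\ $s$, whence $d(0,D_s)\leq\|u\|=:R$, and by nonexpansiveness of the projection $\|\Pi(s,x)\|\leq\|x\|+d(0,D_s)\leq\|x\|+R$ for $\mu$-a.e.\ $s$.

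Feeding this into the growth bound of Assumption~\ref{hyp:dom-diff} gives $\|\partial f_0(s,\Pi(s,x))\|\leq C(s)\bigl(1+(\|x\|+R)^p\bigr)\leq c_R\,C(s)\,(1+\|x\|^p)$ for a constant $c_R$ depending only on $R$ and $p$, so that $\|J_\lambda(s,x)-\Pi(s,x)\|\leq \lambda\,C'(s)(1+\|x\|^p)$ with $C'=2c_R\,C\in L^2(E,\bR_+,\mu)$. Together with $\cZ_{\partial f}(2p)\neq\emptyset$, also part of Assumption~\ref{hyp:dom-diff}, this establishes Assumption~\ref{hyp:Jreg} with the same exponent $p$. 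All hypotheses of Theorem~\ref{the:main} being then in force, the conclusion follows at once: almost surely $(\overline x_n)$ converges weakly to a zero of $\underline A=\partial F$, that is, to a minimizer of $F$.
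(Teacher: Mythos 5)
Your proposal is correct and follows essentially the same route as the paper: verify Assumptions~\ref{hyp:A}--\ref{hyp:zer} as in Theorem~\ref{the:CVdom-fct}, deduce Assumption~\ref{hyp:A0bound} from the growth bound in Assumption~\ref{hyp:dom-diff}, and establish Assumption~\ref{hyp:Jreg} via Lemma~\ref{lem:regSousdif} together with a bound on $\|\Pi(s,x)\|$ obtained from nonexpansiveness of the projection relative to a fixed point of $\cD$, before invoking Theorem~\ref{the:main}. The only cosmetic difference is that you anchor the projection bound at $\Pi(s,0)$ via $d(0,D_s)\leq\|u\|$ for $u\in\cD$, whereas the paper writes $\|\Pi(s,x)\|\leq\|x^*\|+\|x-x^*\|$ for $x^*\in\cD$; these are the same estimate, and your explicit justification that $\cD\neq\emptyset$ is a welcome detail the paper leaves implicit.
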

\begin{proof}
When letting $A=\partial f$, the conditions in Assumptions~\ref{hyp:A}--\ref{hyp:zer}
are fulfilled by using the same arguments as in the proof of Theorem~\ref{the:CVdom-fct}.
Moreover, Assumption~\ref{hyp:dom-diff} implies that the uniform integrability condition
of Assumption~\ref{hyp:A0bound} holds.
To apply Theorem~\ref{the:main}, it is sufficient to verify the condition
of Assumption~\ref{hyp:Jreg} replacing $J_\lambda(s,\,.\,)$ with $\prox_{\lambda f(s,\,.\,)}$.
By Lemma~\ref{lem:regSousdif} and using $\Pi(s,x) \in D_s$, the following holds $\mu$-a.e.
\begin{align*}
  \|\mathrm{prox}_{\lambda f(s,\,.\,)}(x)-\Pi(s,x)\|&\leq 2\lambda \|\partial f_0(s,\Pi(s,x))\| \\
&\leq 2\lambda C(s) (1+\|\Pi(s,x)\|^p)\,.
\end{align*}
Let $x^*$ be an arbitrary point in $\cD$. One has
$\|\Pi(s,x)\|\leq \|x^*\| + \|\Pi(s,x)-\Pi(s,x^*)\|$ where we used the fact that
$x^*=\Pi(s,x^*)$ for all $s$ $\mu$-a.e.
By non-expansiveness of $\Pi(s,\,.\,)$, $\|\Pi(s,x)\|\leq \|x^*\| + \|x-x^*\|$.
Finally, there exists a constant $\alpha$ depending only on $p$ and $x^*$ such that
$\|\mathrm{prox}_{\lambda f(s,\,.\,)}(x)-\Pi(s,x)\|\leq \lambda \alpha C(s) (1+ \|x\|^p)$.
The conclusion follows from Theorem~\ref{the:main}.
\end{proof}


\subsection{A constrained programming problem}
\label{sec:constrained}

\pb{In this section, we provide an application example to the case of constrained convex minimization
over an finite intersection of closed convex sets.}

Let $(X_1,\dots,X_m)$ be a collection of non-empty closed convex subsets of $\cH=\bR^d$ where $d\in \bN^*$.
We consider the problem
\begin{equation}
\min\  F(x) \ \text{w.r.t. }{x\in X}\text{ where }X=\bigcap_{i=1}^m X_i\label{eq:pb-contraint}
\end{equation}
where $F(x)=\int f(s,x)d\mu(s)$ for all $x\in \cH$. 
Consider a random sequence $(I_n)$ on $\{0,1,\dots,m\}$ independent of~$(\xi_n)$, with distribution $p_i = \bP(I_n=i)$ 
for every $i\in \{0,1,\dots,m\}$. Consider the iterations
\begin{equation}
  \label{eq:algo-contraint}
  x_{n+1} =\left\{
    \begin{array}[h]{ll}
      \mathrm{prox}_{\lambda_n f(\xi_{n+1},\,.\,)}(x_n) & \text{if } I_{n+1}=0\\
      \mathrm{proj}_{X_{I_{n+1}}}(x_n) & \text{otherwise.}
    \end{array}
\right.
\end{equation}
\pb{Let us briefly discuss the algorithm. At each time $n$, the iteration either consists in applying
the proximity operator of $f(\xi_{n+1},\,.\,)$ or a projection. The choice is random, the former being 
applied when the r.v. $I_{n+1}$ is zero, the latter being applied otherwise. The value $p_0$ represents
the probability that the proximity operator of $f(\xi_{n+1},\,.\,)$ is applied.
On the opposite, when $I_{n+1}>0$, a certain set is further picked at random, and projection onto that set 
is applied.

\begin{remark}
Instead of applying \emph{either} the proximity operator
of $f(\xi_{n+1},\,.\,)$ or a projection, one could think of applying \emph{both} successively, in the flavor of 
Passty's algorithm~\cite{passty1979ergodic}. Although it is out of the scope of this paper,
the corresponding algorithm may be analyzed using similar principles.
\end{remark}
}

\begin{assum}
\hfill
  \begin{enumerate}[(i)]
  \item The sets $X_1,\dots,X_m$ are boundedly linearly regular in the sense of~(\ref{eq:linearly-reg}) and $X=\cap X_i$ is non-empty.
  \item $f:E\times \cH\to \bR$ is a normal convex integrand and $f(\,.\,,x)$ is integrable for each $x\in \cH$.
  \item 
A solution to~(\ref{eq:pb-contraint}) exists and any solution $x^\star$
satisfies $|\partial f(\,.\,,x^\star)|\in  L^2(E,\bR,\mu)$.
\item There exists $p\in\bN^*$ and a solution $x^\star_p$ 
such that $|\partial f(\,.\,,x^\star_p)|\in  L^{2p}(E,\bR,\mu)$.
\item There exists $C\in L^2(E,\bR_+,\mu)$ such that for any $x\in \cH$,
$\|\partial f_0(s,x)\|\leq C(s)(1+\|x\|^p)$ $\mu$-a.e.
  \end{enumerate}
\label{hyp:constraint}
\end{assum}

\begin{theo}
  Let Assumptions~\ref{hyp:step-iid} and \ref{hyp:constraint} hold. Consider the iterates $(x_n)$ given by~(\ref{eq:algo-contraint})
with weighted averaged sequence $(\overline x_n)$ where the random sequence $(I_n)$ is is defined above.
Assume that $p_i>0$ for all $i\in \{0,1,\dots,m\}$ and let $\lambda_n/\lambda_{n+1}\to 1$ as $n\to \infty$.
Then, almost surely, $(\overline x_n)$ converges in average to a solution to~(\ref{eq:pb-contraint}).
\end{theo}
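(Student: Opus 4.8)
The plan is to recognize the mixed prox/projection recursion~(\ref{eq:algo-contraint}) as a single instance of the algorithm~(\ref{eq:algo-fct}) on an enlarged index space, and then to invoke Theorem~\ref{the:main-fct}. I would set $\tilde E=\{0,1,\dots,m\}\times E$, let $\tilde\xi_n=(I_n,\xi_n)$, and equip $\tilde E$ with the law $\tilde\mu$ of $\tilde\xi_1$; since $(I_n)$ is i.i.d. and independent of the i.i.d. sequence $(\xi_n)$, the sequence $(\tilde\xi_n)$ is i.i.d., so Assumption~\ref{hyp:step-iid} transfers. I would then define the normal convex integrand $\tilde f:\tilde E\times\cH\to(-\infty,+\infty]$ by $\tilde f((0,s),x)=f(s,x)$ and $\tilde f((i,s),x)=\iota_{X_i}(x)$ for $i\geq 1$. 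Because $\prox_{\lambda\iota_{X_i}}=\proj_{X_i}$, the iteration~(\ref{eq:algo-contraint}) is exactly $x_{n+1}=\prox_{\lambda_n\tilde f(\tilde\xi_{n+1},\,.\,)}(x_n)$, i.e.\ the scheme~(\ref{eq:algo-fct}) for $\tilde f$. A direct computation gives the integral functional $\tilde F=p_0F+\sum_{i=1}^m p_i\iota_{X_i}=p_0F+\iota_X$, where $\sum_i p_i\iota_{X_i}=\iota_X$ precisely because every $p_i>0$; since $p_0>0$, the set $\arg\min\tilde F=\arg\min_{x\in X}F$ is exactly the solution set of~(\ref{eq:pb-contraint}).

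It then remains to check, for $\tilde f$ and $\tilde\mu$, the hypotheses of Theorem~\ref{the:main-fct}, i.e.\ Assumptions~\ref{hyp:reg-domaine},~\ref{hyp:integrand} and~\ref{hyp:dom-diff} (Assumption~\ref{hyp:step-iid} being already settled and $\lambda_n/\lambda_{n+1}\to1$ being assumed). The essential intersection of the domains $D_{(0,s)}=\cH$ and $D_{(i,s)}=X_i$ is $\tilde\cD=X$. For Assumption~\ref{hyp:reg-domaine} I would compute $\int d(x,D_{\tilde s})^2\,d\tilde\mu(\tilde s)=\sum_{i=1}^m p_i\,d(x,X_i)^2\geq(\min_i p_i)\max_i d(x,X_i)^2$ and then apply the bounded linear regularity~(\ref{eq:linearly-reg}) of $(X_1,\dots,X_m)$ to obtain the lower bound $(\min_i p_i)(\kappa'_M)^2\,d(x,X)^2$, so $\kappa_M=(\min_i p_i)(\kappa'_M)^2>0$ works; the positivity $p_i>0$ is essential here. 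Assumption~\ref{hyp:dom-diff} for $\tilde f$ follows from Assumption~\ref{hyp:constraint}(v): on the block $i=0$ one has $\|\partial f_0(s,x)\|\leq C(s)(1+\|x\|^p)$, while on each block $i\geq1$ one has $0\in N_{X_i}(x)$, hence $\partial\tilde f_0((i,s),x)=0$; the dominating function $\tilde C((0,s))=C(s)$, $\tilde C((i,s))=0$ lies in $L^2(\tilde E,\tilde\mu)$, and the $X_i$ are closed. This in turn gives Assumption~\ref{hyp:A0bound} exactly as in the proof of Theorem~\ref{the:main-fct}, and the resolvent estimate~\ref{hyp:Jreg} follows from Lemma~\ref{lem:regSousdif} as there, the projection blocks contributing $\prox_{\lambda\iota_{X_i}}-\Pi((i,s),\,.\,)=0$.

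The delicate point, which I expect to be the main obstacle, is Assumption~\ref{hyp:integrand}(iii) — the interchange $\partial\tilde F(x)=\int\partial\tilde f(\tilde s,x)\,d\tilde\mu(\tilde s)$ — because this is what delivers maximality of $\underline{\tilde A}$ (Assumption~\ref{hyp:Abar}) and the identification $\zer(\underline{\tilde A})=\arg\min\tilde F$. The right-hand side equals $p_0\int\partial f(s,x)\,d\mu(s)+\sum_{i=1}^m N_{X_i}(x)$, while the Moreau--Rockafellar sum rule (valid since $F$ is finite-valued, hence continuous, on $\bR^d$) gives $\partial\tilde F(x)=p_0\partial F(x)+N_X(x)$. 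Matching the two requires two separate facts: the interchange $\partial F=\int\partial f\,d\mu$ for the original integrand, which I would get from~\cite{rockafellar1982interchange} using that $f(s,\,.\,)$ is real-valued together with the $L^2$ subgradient bound of Assumption~\ref{hyp:constraint}(v); and the additivity of normal cones $N_X(x)=\sum_{i=1}^m N_{X_i}(x)$ (the strong conical hull intersection property), which I would derive from bounded linear regularity via~\cite{bauschke1999strong}. Finally, for Assumption~\ref{hyp:integrand}(iv) I would take the solution $x^\star_p$ of Assumption~\ref{hyp:constraint}(iv): writing $0=p_0 v+\sum_i w_i$ with $v\in\partial F(x^\star_p)$ and $w_i\in N_{X_i}(x^\star_p)$, the selection equal on block $i=0$ to some $\phi_0\in\partial f(\,.\,,x^\star_p)$ with $\int\phi_0\,d\mu=v$ and equal to $w_i/p_i$ on block $i\geq1$ integrates to $0$ against $\tilde\mu$, and it is $(2p)$-integrable since $\|\phi_0(s)\|\leq|\partial f(s,x^\star_p)|\in L^{2p}(\mu)$ and the $w_i$ are constant; thus $x^\star_p\in\cZ_{\partial\tilde f}(2p)$, and the same construction with Assumption~\ref{hyp:constraint}(iii) places every solution in $\cZ_{\partial\tilde f}(2)$. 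With all hypotheses verified, Theorem~\ref{the:main-fct} applies to $\tilde f$ and yields the almost sure convergence in average of $(\overline x_n)$ to a minimizer of $\tilde F$, that is, to a solution of~(\ref{eq:pb-contraint}).
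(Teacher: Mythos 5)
Your proof is correct and follows essentially the same route as the paper's: lifting to the product space with the integrand $\tilde f$, establishing $\partial \tilde F(x)=\int \partial \tilde f(\tilde s,x)\,d\tilde\mu(\tilde s)$ via \cite{rockafellar1982interchange} together with the normal-cone additivity $N_X=\sum_i N_{X_i}$ from \cite{bauschke1999strong}, verifying the regularity, growth and closed-domain hypotheses, and constructing the square- (resp.\ $2p$-)integrable zero selections from Assumptions~\ref{hyp:constraint}(iii)--(iv). The only cosmetic difference is that you invoke the convex-programming specialization Theorem~\ref{the:main-fct}, whereas the paper checks Assumptions~\ref{hyp:A}--\ref{hyp:Jreg} for $A=\partial\tilde f$ and applies Theorem~\ref{the:main} directly; since Theorem~\ref{the:main-fct} is itself proved by exactly that reduction, the two arguments coincide.
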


\begin{proof}
We introduce the random sequence $\tilde \xi_{n}=(\xi_n,I_n)$ on the set $\tilde E = E\times \{0,1,\dots,m\}$
equipped with the corresponding product $\sigma$-algebra.
We denote by $\nu=\mu\otimes (\sum_{i=0}^m p_i\delta_i)$ the probability distribution of $\tilde \xi_n$
where $\delta_i$ stands for the Dirac measure at $i$.
For all $\tilde s=(s,i)$ in $\tilde E$ and $x\in \cH$, define
$$
\tilde f(\tilde s,x) = f(s,x)\chi_{\{0\}}(i) + \sum_{j=1}^m \iota_{X_j}(x)\,\chi_{\{j\}}(i)
$$
where $\chi_C$ is the characteristic function of a set $C$ (equal to 1 on that set and zero outside) and
$\iota_C$ is the indicator function of a set $C$ (equal to 0 on that set and $+\infty$ outside).
We use the convention $0\times (+\infty)=0$. The iterations~(\ref{eq:algo-contraint})
also write
$$
x_{n+1} = \prox_{\lambda_n \tilde f(\tilde \xi_{n+1},\,.\,)}(x_n)\,.
$$
\pb{The rest of the proof consists once again in checking the conditions of application of Theorem~\ref{the:main},
when $E$, $\xi$, $A$ are respectively replaced by $\tilde E$, $\tilde \xi$, $\partial \tilde f$.
\smallskip

\emph{Checking Assumptions~\ref{hyp:A}, \ref{hyp:Abar} and~\ref{hyp:zer}}.
We first make the following observations.\\
}
(i)  $\tilde f$ is a normal convex integrand on $\tilde E\times \cH\to \bR$.\\
(ii) As $f(\,.\,,x)$ is integrable for any $x$, it follows that $F=\int f(\,.\,,x)$ is 
proper, convex and continuous.
Since $p_i>0$ for all $i$, the integral functional $\tilde F(x) = \int \tilde f(\,.\,,x)d\nu$ is equal to
$$
\tilde F(x) = p_0\, F(x) + \iota_X(x) 
$$
where $X = \bigcap_{i=1}^m X_i$.
\pb{As $X$ is a non-empty closed convex set and  $\dom(F)=\cH$,} it follows that
$\tilde F$~is proper and lower semicontinuous.\\
(iii) 
\pb{Let $N_C(x)$ denotes the normal cone of a closed convex set $C$ at point $x$. By the same argument,}
$$
\partial \tilde F(x) = p_0\partial F(x) + {N}_X(x)\,.
$$
Moreover, for any $\tilde s=(s,i)$,
\begin{equation}
\partial \tilde f(\tilde s,x) = \partial f(s,x)\chi_{\{0\}}(i) + \sum_{j=1}^m N_{X_j}(x)\,\chi_{\{j\}}(i)\label{eq:partial-tilde}
\end{equation}
and it follows that
$$
\int \partial \tilde f(\,.\,,x)d\nu = p_0 \int \partial f(\,.\,,x)d\mu + \sum_{i=1}^m N_{X_i}(x)\,.
$$
\pb{By Assumption~\ref{hyp:constraint}(i), the sets $X_1,\dots, X_m$ are linearly regular.
By \cite[Theorem 3.6]{bauschke1999strong}, this implies that $\sum_{i=1}^m N_{X_i}(x)=N_X(x)$.
Moreover, as $F$ is everywhere finite, $\int \partial f(\,.\,,x)d\mu = \partial F(x)$ by \cite{rockafellar1982interchange}.}
We conclude that for every $x\in \cH$,
\begin{equation}
\int \partial \tilde f(\,.\,,x)d\nu = \partial \tilde F(x)\,.\label{eq:Aumann-tilde}
\end{equation}
(iv) The minimizers of $\tilde F$ are the solutions to~(\ref{eq:pb-contraint}) and vice-versa.
In particular, $\tilde F$ admits minimizers. Let us prove that each minimizer $x^\star$ belongs to 
$\cZ_{\partial \tilde f}(2)$. 
By Fermat's rule, $0\in \partial \tilde F(x^\star)$.
Using successively~(\ref{eq:Aumann-tilde}) and~(\ref{eq:partial-tilde}), there exists 
$\phi\in S_{\partial f}(x^\star)$ and $(u_1,\dots,u_m)\in N_{X_1}(x^\star)\times\dots\times N_{X_m}(x^\star)$ such that
$0 = p_0 \int \phi d\mu+\sum_{i=1}^mp_i u_i$. Define for any $(s,i)\in \tilde E$,
$\tilde\phi(s,i) =  \phi(s)\chi_{\{0\}}(i) + \sum_{j=1}^m u_j\,\chi_{\{j\}}(i)$.
Clearly, $\tilde\phi(s,i)\in \partial \tilde f((s,i),x^\star)$ and $\int\tilde\phi d\nu=0$.
By Assumption~\ref{hyp:constraint}(iii), $\int \|\tilde\phi\|^{2}d\nu<+\infty$.
Therefore, $x^\star\in \cZ_{\partial \tilde f}(2)$.

We have checked that the four conditions in Assumption~\ref{hyp:integrand} are fulfilled
when $f$ and $F$ are respectively replaced by $\tilde f$ and $\tilde F$.
Now set $A=\partial \tilde f$. Using the same arguments as in the proof of Theorem~\ref{the:CVdom-fct},
the operator $A$ satisfies the conditions in Assumptions~\ref{hyp:A}, \ref{hyp:Abar} and~\ref{hyp:zer}.
\smallskip

Assumption~\ref{hyp:step-iid} being granted, it remains to check that $A=\partial \tilde f$
fulfills Assumptions~\ref{hyp:A0bound}, \ref{hyp:reg-domaine} and~\ref{hyp:Jreg}.
\smallskip

\pb{\emph{Checking Assumptions~\ref{hyp:A0bound} and \ref{hyp:reg-domaine}.}
By Equation~(\ref{eq:partial-tilde}), $\partial f_0(s,x)\chi_{\{0\}}(i)\in \partial \tilde f(\tilde s,x)$.
Therefore, $\|\partial \tilde f_0(s,x)\|\leq \|\partial  f_0(s,x)\|$. By Assumption~\ref{hyp:constraint}(v), 
the uniform integrability condition in Assumption~\ref{hyp:A0bound} is fulfilled. 
Using the linear regularity of the sets $X_1,\dots, X_m$,
Assumption~\ref{hyp:reg-domaine} is satisfied when substituting $D_s$ with $\dom(\partial \tilde f(s,\,.\,))$.
\smallskip

\emph{Checking Assumption~\ref{hyp:Jreg}.}
We finally check that $A=\partial \tilde f$ fulfills Assumption~\ref{hyp:Jreg}.}
Let $p\in \bN^*$ and $x^\star_p$ be defined as in Assumption~\ref{hyp:constraint}(iv).
Following the exact same line as above, one can construct $\tilde\phi$ such that 
$\tilde\phi(s,i)\in \partial \tilde f((s,i),x^\star_p)$,  $\int\tilde\phi d\nu=0$
and $\int \|\tilde\phi\|^{2p}d\nu<+\infty$. Therefore $\cZ_{\partial\tilde f}(2p)\neq\emptyset$.
Denote by $\tilde J_\lambda(\tilde s, x) = \prox_{\lambda \tilde f(\tilde s, \,.\,)}(x)$ and $\tilde \Pi(s,x)$
the projection of $x$ onto the domain of $\partial \tilde f(\tilde s, \,.\,)$.
For any $\tilde s=(s,i)$, one has $\tilde J_\lambda(\tilde s, x)-\tilde \Pi(\tilde s, x)=0$ if $i\geq 1$.
When $i=0$, $\tilde J_\lambda(\tilde s, x)=\prox_{\lambda f(s,\,.\,)}(x)$ and $\tilde \Pi(\tilde s, x)=x$. Thus,
$\frac 1\lambda \|\tilde J_\lambda(\tilde s, x)-\tilde \Pi(\tilde s, x)\|\leq \|\partial f_0(s,x)\|$
which is no larger that $C(s)(1+\|x\|)$. As $C$ is square-integrable, 
we conclude that the operator $A=\partial \tilde f$ fulfills Assumption~\ref{hyp:Jreg}.

By Theorem~\ref{the:main}, the iterates~(\ref{eq:algo-contraint}) almost surely converge weakly in average
to a zero of $\partial \tilde F$. As zeroes of  $\partial \tilde F$ coincide with solutions to~(\ref{eq:pb-contraint}),
the proof is complete.
\end{proof}

\pb{
\section{Conclusion}

In this paper, we introduced a stochastic proximal point algorithm for random maximal 
monotone operators
and proved the almost sure weak ergodic convergence of the algorithm toward a zero of the
Aumann expectation of the latter random operators.
The paper suggests that, by using the concept of random monotone operators, it is possible
to easily derive stochastic versions of different fixed point algorithms and to prove their almost 
sure convergence. This idea can be extended to provide stochastic counterparts of other algorithms:
the forward-backward algorithm which involves both implicit and explicit calls of the 
operators~\cite{bauschke2011convex}, Passty's algorithm~\cite{passty1979ergodic} or the 
Douglas-Rachford algorithm~\cite{lions1979splitting}.
Other important questions include the derivation of convergence rates.
Although a complexity analysis of the stochastic proximal point algorithm~(\ref{eq:algo-intro}) seems out of reach in 
the general setting, it would be important to address such an analysis in the special
case of convex programming~(\ref{eq:intro-ite-prox}). The paper \cite{nemirovski2009robust}
follows such an approach, in the case where the convex function are used explicitely.
An interesting perspective would be to extend the method to the case to the stochastic proximal point algorithm.
An alternative is to investigate asymptotic convergence rates, as in~\cite{ryu2014stochastic}.
Finally, the relaxation of the i.i.d. assumption over 
the random monotone operators would be an important problem in future works.}

\section*{Acknowledgement}

\pb{The author also would like to thank the anonymous reviewers for
their comments and for pointing useful references.
The author is grateful to Walid Hachem for important suggestions 
which allowed to significantly improve the manuscript.  
}

\bibliographystyle{siam} 

\end{document}